%% file: p1m2_cr.tex
\documentclass[fleqn]{elsart}

\usepackage{amsfonts}
\usepackage{amssymb}
\usepackage{amsmath}
\usepackage{graphicx}

\def\la#1{\label{eq:#1}}
\def\re#1{(\ref{eq:#1})}

\newcommand{\reals}{\mathbb{R}}
\newcommand{\natnums}{\mathbb{N}}

\newcommand{\eqbydef}{\stackrel{\rm def}{=}}

\newcommand{\uo}[2]{\ensuremath{{u_{#1}}_{(#2)}}}

\newcounter{hypcglobal}
\newcounter{hypclocal} 
\newenvironment{hypolist}[1]%
{%
\renewcommand{\thehypclocal}{H\arabic{hypclocal}}
\begin{list}{\thehypclocal.}{\usecounter{hypclocal}}%
\setcounter{hypclocal}{#1}
}%
{%
\renewcommand{\thehypclocal}{\arabic{hypclocal}}
\setcounter{hypcglobal}{\thehypclocal}
\end{list}
}%

\begin{document}
\begin{frontmatter}

\title{A Geometric Approach to Feedback Stabilization of Nonlinear Systems with Drift\thanksref{project}}

\thanks[project]{This work was supported by NSERC of Canada under grant OGP-0138352.}

\author{H. Michalska}
\ead{michalsk@cim.mcgill.ca}
\author{\hspace{-.7ex}, }
\author{M. Torres-Torriti}
\ead{migueltt@cim.mcgill.ca}

\address{Department of Electrical \& Computer Engineering, McGill University,\newline    3480 University Street, Montr\'{e}al, QC, Canada H3A 2A7}

\begin{abstract}
The paper presents an approach to the construction of stabilizing feedback for strongly nonlinear systems.  The class of systems of interest includes systems with drift which are affine in control and which cannot be stabilized by continuous state feedback.  The approach is independent of the selection of a Lyapunov type function, but requires the solution of a nonlinear programming {\em satisficing problem} stated in terms of the logarithmic coordinates of flows.   As opposed to other approaches, point-to-point steering is not required to achieve asymptotic stability.  Instead, the flow of the controlled system is required to intersect periodically a certain reachable set in the space of the logarithmic coordinates.
\end{abstract}

\begin{keyword}
Nonlinear systems with drift; Control synthesis; Stabilization; Time-varying state feedback; Systems on Lie groups.
\end{keyword}

\end{frontmatter}


\section{Introduction}
The paper presents an approach to the design of feedback stabilizing controls for systems with drift which take the general form:
\begin{eqnarray}
\Sigma:\hspace{1cm}\dot{x}=f_0(x)+\sum_{i=1}^{m}f_i(x)u_i \eqbydef f^u(x)\la{S}
\end{eqnarray}
where, the state $x(t)$ evolves on $\reals^n$, $u\eqbydef[u_1,\ldots,u_m]$, $u_i\in\reals$ are the control inputs, $m<n$, and $f_i,\ i=0,1,\ldots,m$, are real analytic vector fields on $\reals^n$.  Feedback stabilization of systems of this type can be very challenging in the case when~\re{S} fails to satisfy Brockett's necessary condition for the existence of continuous stabilizing state feedback laws, see~\cite{BRO83}.  Systems of this type are encountered in a number of applications, for example, the control of rigid bodies in space, control of systems with acceleration constraints, and control of a variety of underactuated dynamical systems.  Hence, the development of {\em general} stabilization approaches for such systems deserves further attention.

As compared with driftless systems, relatively few approaches have so far been proposed for the stabilization of systems with drift.  The difficulty of steering systems with drift arises from the fact that, in the most general case of non-recurrent or unstable drift, the system motion along the drift vector field needs to be counteracted by enforcing system motions along adequately chosen Lie bracket vector fields in the system's underlying controllability Lie algebra.  Such indirect system motions are complex to design for and can be achieved only through either time-varying open-loop controls or discontinuous state feedback.  

The majority of relevant feedback stabilization methods for systems with drift  found in the literature, contemplate systems of specific structure and deliver feedback laws which apply exclusively to the particular models considered, see for example~\cite{BLO92,BUL00,GOD99,KOL94,MCL98,MOR97,PET99,REY99}.
 
More\hfill general\hfill feedback\hfill design\hfill methodologies\hfill have\hfill been\hfill proposed\hfill in\newline
 \cite{HER80,CRO84,LEO93,MIC97,MIC01} and apply to systems in general form~\re{S}.  The first systematic procedure for the construction of stabilizing piece-wise constant controls was presented in~\cite{HER80} and clearly demonstrates the difficulty of the problem.  The method of~\cite{HER80} has been applied in~\cite{CRO84} to asymptotically steer the the attitude and angular velocity of an underactuated spacecraft to the origin.  General averaging techniques on Lie groups have been successfully applied to attitude control in~\cite{LEO93}.  The Lie algebraic approach outlined in~\cite{MIC97} requires an analytic solution of a trajectory interception problem for the flows of the original system and its Lie algebraic extension in terms of the logarithmic coordinates on the associated Lie group.  Such analytic solutions are generally difficult to derive.  The approach in~\cite{MIC01} draws on the ideas of~\cite{COR91,COR92}, which apply to systems without drift.  The method of~\cite{MIC01} combines the construction of a periodic time-varying critically stabilizing control with the on-line calculation of an additional corrective term to provide for asymptotic convergence to the origin.  

In the above context, the contributions of this paper can be described as follows. 
\begin{itemize}
\item
An approach to stabilization of general systems of the form~\re{S} is presented which is based on the explicit calculation of a reachable set of desirable states for the controlled system.  Such reachable set is determined when the system $\Sigma$ is reformulated as a right-invariant system on an analytic, simply connected, nilpotent Lie group and once the stabilization problem is re-stated accordingly.  The reformulation allows for the time-varying part of the stabilizing feedback control to be derived as the solution of a nonlinear programming problem for steering the open-loop system $\Sigma$ to the given reachable set of states.  The construction of the feedback law does not require numerical integration of the model differential equation and is independent of the choice of a Lyapunov type function.

\item
Unlike other approaches,~\cite{HER80,CRO84,LAF93,MIC97,MIC01}, point-to-point steering is not required to achieve asymptotic stability.  Instead, the flow of the controlled system is required to intersect periodically a certain reachable set in the space of the logarithmic coordinates.  In contrast to~\cite{HER80,CRO84,LAF93}, the approach presented here offers a proof for Lyapunov asymptotic stability of the controlled system to the equilibrium and applies to systems whose linearization may be uncontrollable.

\item 
The approach presented might prove useful for the construction of feedback laws with a reduced number of control discontinuities and for the development of computationally feasible methods toward the design of smooth time-varying stabilizing feedback.  This is because the system is shown to remain asymptotically stable provided that the state of the system traverses the sets of desirable states periodically in time.  Since the sets of desirable states are typically large, the latter condition leaves much freedom for improved design, which is not the case for previous methods based on point-to-point steering.

\item
A fairly complex example is presented which clearly demonstrates the computational feasibility of the approach and its potential for extension to non-nilpotent systems.  The example employs a specialized Maple software package recently developed by the authors to facilitate symbolic Lie algebraic calculations\footnote{The {\em Lie Tools Package} (LTP) is freely available at: \newline
\texttt{http://www.cim.mcgill.ca/$\sim$migueltt/ltp/ltp.html}}.  
\end{itemize}

The\hfill proposed\hfill approach\hfill is\hfill based\hfill on\hfill a\hfill different\hfill concept\hfill than\hfill those\hfill used\hfill in\newline \cite{HER80,CRO84,LEO93}\hfill and\hfill compares\hfill favourably\hfill with\hfill the\hfill previous\hfill methods\hfill proposed\hfill in\newline \cite{MIC97,MIC01}.  One advantage of the present approach is that it does not require the exact analytic solution of the trajectory interception problem for the flows for the original and extended system, as in~\cite{MIC97}.  It also avoids the expensive on-line computation of the value and the gradient of the Lyapunov type function whose level sets are the trajectories of the critically stabilized system, as required in~\cite{MIC01}.  Furthermore, it should be easier to apply to systems with larger dimension of the state or controllability Lie algebra than the methods in~\cite{HER80,CRO84}.

\section{Problem Definition and Assumptions}\label{sec:def}
{\em {\bf Problem Definition.}  Construct a time-varying control law which globally stabilizes system $\Sigma$ to the origin.}

The sets of  positive integers and non-negative reals are denoted by $\natnums$ and $\reals_+$, respectively.  If $\mathcal{F}\eqbydef \{f_0,\ldots,f_m\}$ is a family of vector fields defined on $\reals^n$ then $L(\mathcal{F})$ is used to denote the Lie algebra of vector fields generated by $\mathcal{F}$, and $L_x(\mathcal{F})\eqbydef \{f(x)~|~f \in L(\mathcal{F}) \}\subset \reals^n$.  All vector fields considered here are assumed to be real, analytic, and complete, i.e. any vector field $f$ is assumed to generate a globally defined one-parameter group of transformations acting on $\reals^n$ and denoted by $\exp(t f)$.  This implies that for all $t_1, t_2\in\reals$, $\exp(t_1 f)\circ \exp(t_2 f)=\exp((t_1+t_2)f)$, and for all $x\in\reals^n$, $x(t)=\exp(t f)x$ satisfies the differential equation $\dot{x}=f$ with initial condition $x(0)=x$.  It is a well known fact, see~\cite[p. 95]{PAL57}, that if all generators in $\mathcal{F}$ are analytic and complete then all vector fields in $L(\mathcal{F})$ are analytic and complete. 

Solutions to system $\Sigma$, starting from $x(0)=x$ and resulting from the application of a control $u$, are denoted by $x^o(t,x,u)$, $t\geq 0$. 

The following hypotheses are assumed to hold with respect to system $\Sigma$ where $\mathcal{P}^m$ denotes the family of piece-wise constant functions, continuous from the right, and defined on $\reals^m$.

\begin{hypolist}{\thehypcglobal}
\item\label{h:complete-nilpotent}
The vector fields $f_0,\ldots,f_m : \reals^n\rightarrow\reals^n$ are real, analytic, complete, and linearly independent, with $f_0(0)=0$, and generate a nilpotent Lie algebra of vector fields $L(\mathcal{F})$, such that its dimension is $\mbox{dim} L(\mathcal{F}) = r \geq n+1$.
\item\label{h:controllable}
The system $\Sigma$ is strongly controllable, i.e. for any $T>0$ and any two points $x_0, x_f \in \reals^n$, $x_f$ is reachable from $x_0$ by some control $u \in \mathcal{P}^m$ of $\Sigma$ in time not exceeding $T$; i.e. there exists a control $u \in \mathcal{P}^m$ and a time $t \leq T$ such that $x^o(t,x_0,u)=x_f$.
\end{hypolist}

The method presented here employs an arbitrary Lyapunov type function $V:\reals^n\rightarrow \reals_+$ which is only required to satisfy the following conditions:
\stepcounter{hypcglobal}
\begin{hypolist}{\thehypcglobal}
\item[H{\thehypcglobal}.a.]~\label{h:vdiff-vpos} $V$ is twice continuously differentiable with $V(0)=0$. Additionally, there exists a constant  $\zeta>0$, such that for all $x\in\reals^n$, $\|\nabla{V}(x)\|\geq \zeta\|x\|$.
\item[H{\thehypcglobal}.b.]~\label{h:vdec}  $V$ is positive definite and decrescent, i.e. there exist continuous, strictly increasing functions $\alpha(\cdot):\reals_+\rightarrow\reals_+$ and $\beta (\cdot):\reals_+\rightarrow\reals_+$, 
with  $\alpha(0)=\beta(0)=0$, such that for all $x\in\reals^n$, $\alpha(\|x\|)\leq V(x) \leq \beta(\|x\|)$.  
\end{hypolist}
The function $V$ is not a Lyapunov function in the usual sense in that it will be allowed to increase instantaneously along the trajectories of the stabilized system.  However, in the sequel, the function $V$ will still be referred to as a Lyapunov function.

\section{Basic Facts and Preliminary Results}
Let $\mathcal{R}_\mathcal{F}(T,x)$ denote the reachable set of $\Sigma$ at time $T$ from $x$ by piece-wise constant controls.  A well known consequence of strong controllability is that the condition for accessibility is satisfied, namely: $\mbox{span}\,L_x(\mathcal{F})=\reals^n$ for all $x\in\reals^n$.

It is helpful to introduce the following subsets of $\textit{diff}(\reals^n)$, the group (under composition) of diffeomorphisms on $\reals^n$:
\newline
{\small
\hspace*{-0mm}
\parbox[t]{\textwidth}{
\begin{eqnarray} 
&G   \eqbydef \{ \exp( t_1 f^{\uo{}{1}}) \circ \cdots \circ \exp( t_k f^{\uo{}{k}})~|~\uo{}{i} \in \reals^m;\ t_i \in \reals;\ 
k\in\natnums \}&\mbox{\hspace{5mm}}\nonumber \\
&G_T \eqbydef \{ \exp( t_1 f^{\uo{}{1}}) \circ \cdots \circ \exp( t_k f^{\uo{}{k}})~|~\uo{}{i} \in \reals^m;\ t_i \geq 0;\  
\sum_{i=1}^k t_i = T;\ k\in\natnums \}&\mbox{\hspace{5mm}}\nonumber
\end{eqnarray} 
}
}
\newline
where $f^{\uo{}{i}}\eqbydef f_0+\sum_{j=1}^{m}f_j\,\uo{j}{i}$, and $\uo{j}{i}$ are components of $\uo{}{i}$.  By the strong controllability hypothesis, if $G\,x$ and $G_T\,x$ denote the orbits through $x$ of $G$ and $G_T$, respectively, i.e. $G\,x = \{ g\,x~|~g \in G \}$ and $G_T\,x = \{g\,x~|~g \in G_T \}$, then $\mathcal{R}_\mathcal{F}(T,x) = G_T\,x=G\,x =\reals^n$, for any $T>0$.  The group $G \subset \textit{diff}(\reals^n)$ is a subgroup of $\textit{diff}(\reals^n)$,~\cite{HIR75}.  Moreover, by virtue of the results by R. Palais,~\cite{PAL57}, $G$ can be given a structure of a Lie group with Lie algebra isomorphic to $L(\mathcal{F})$.  The result of Palais~\cite{PAL57}, interpreted so as to apply to system $\Sigma$, is worth citing:
%
\begin{thm}[Palais,~\cite{PAL57}, p. 95]
Let $L(\mathcal{F})$ be a finite dimensional Lie algebra of vector fields defined on $\reals^n$.  Assume that all the generators in $\mathcal{F}$ are analytic and complete.  Then there exists a unique analytic, simply connected Lie group $H$, whose underlying group is a subgroup of $\textit{diff}(\reals^n)$, and a unique global action of this group on $\reals^n$ defined as an analytic mapping $\phi : H  \times \reals^n \ni (h,p) \rightarrow h(p) \in \reals^n$ which induces an isomorphism between the Lie algebra, $L(H)$, of right invariant vector fields on $H$, and $L(\mathcal{F})$.  Precisely, the isomorphism $\phi^+_L : L(H) \rightarrow L(\mathcal{F})$ is constructed by setting:
\[
\phi^+_L(\lambda)(p) =(d\phi_p)_e(\lambda(e))\ \ \mbox{for all } 
\lambda \in L(H),\ p \in \reals^n 
\]
where $e \in H$ is the identity element, and $(d\phi_p)_e$ is the differential of a mapping $\phi_p: H \rightarrow \reals^n$ at identity.  For any $p \in \reals^n$, the mapping $\phi_p$ is defined by: $\phi_p(h) = h(p)$.
\end{thm}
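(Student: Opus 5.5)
The plan is to realize the abstract group determined by $L(\mathcal{F})$ concretely inside $\textit{diff}(\reals^n)$, in three steps.

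\textbf{Step 1: the abstract group.} Let $\mathfrak{g}$ be the abstract finite dimensional Lie algebra isomorphic to $L(\mathcal{F})$. By Lie's third theorem there is a unique connected, simply connected analytic Lie group $\tilde H$ whose Lie algebra of right invariant vector fields is $\mathfrak{g}$. Fix the identification $\psi:\mathfrak{g}\to L(\mathcal{F})$. We want a local action of $\tilde H$ on $\reals^n$ whose infinitesimal generators are given by $\psi$.

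\textbf{Step 2: the local action.} The standard construction uses the manifold $\tilde H\times\reals^n$ and the distribution spanned by the vector fields $(\lambda,\psi(\lambda))$, $\lambda\in\mathfrak{g}$.
\begin{itemize}
\item This distribution is involutive, because $\psi$ is a Lie algebra homomorphism (the sign conventions for right invariant fields must be matched to the convention for brackets of vector fields on $\reals^n$).
\item It has constant rank $\dim\mathfrak{g}$, and each of its leaves projects locally diffeomorphically onto $\tilde H$.
\item By Frobenius, the leaf through $(e,p)$ is locally the graph of an analytic map $h\mapsto h(p)$ defined near $e$.
\end{itemize}
This gives an analytic local action $\phi$ with $(d\phi_p)_e(\lambda(e))=\psi(\lambda)(p)$; analyticity follows from the analytic Frobenius theorem.

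\textbf{Step 3: globalization.} This is the main obstacle, and it is where completeness is used. The goal is to show that each leaf projects onto $\tilde H$ as a covering map. Since $\tilde H$ is simply connected, each leaf is then a global graph, and $\phi$ extends to a global action $\tilde H\times\reals^n\to\reals^n$.

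I would argue via path lifting. Every $h\in\tilde H$ is a finite product of exponentials $\exp(t_i\lambda_i)$ with $\lambda_i$ ranging over preimages of the generators $f_0,\ldots,f_m$, since these generate $\mathfrak{g}$. Lifting such a piecewise one-parameter path into the leaf amounts to solving the flows of $f_0,\ldots,f_m$ on $\reals^n$, and these flows are global by completeness. So piecewise exponential paths lift for all time. Density or approximation arguments, together with the local product structure, then give the unique path lifting property and hence the covering property.

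The delicate point is that only the generators are assumed complete, not all of $L(\mathcal{F})$. Palais' key lemma handles this: if the set of complete fields in a finite dimensional algebra generates it, then every element is complete. I would prove this by showing that the set of complete $X$ is closed under $\mathrm{Ad}$ by flows of complete fields, using
\[
\exp(tY)_*X=e^{t\,\mathrm{ad}Y}X,
\]
and then obtaining a spanning set of complete fields by repeated Ad-conjugation.

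\textbf{Uniqueness and conclusion.} Uniqueness of the group structure and action follows from simple connectivity together with uniqueness of integral manifolds. Finally, let $H$ be the image of $\tilde H$ in $\textit{diff}(\reals^n)$. If the action has a kernel, this kernel is a discrete normal subgroup, since the infinitesimal map is injective. Then $H$ carries the quotient structure, and one argues, as Palais does, that the effectivity convention yields the stated group. $H$ is analytic and simply connected with $\phi^+_L$ an isomorphism. In the nilpotent case of interest (H1), $\exp$ is a global diffeomorphism, which makes this last step transparent.
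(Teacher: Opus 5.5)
The paper gives no proof of this theorem; it quotes it from Palais~\cite{PAL57}. Your Steps 1--3 are the standard route to Palais' result, modulo a repair noted below. Your final paragraph, however, has a genuine gap, and it cannot be closed as written.

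If the kernel $N$ of $\tilde H\to\textit{diff}(\reals^n)$ is nontrivial, it is discrete and central, and the image is $\tilde H/N$ with $\pi_1(\tilde H/N)\cong N\neq\{e\}$. So no ``effectivity convention'' can produce a group that is both a subgroup of $\textit{diff}(\reals^n)$ and simply connected.

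Nilpotency does not help. Take $\mathcal{F}=\{-x_2\,\partial/\partial x_1+x_1\,\partial/\partial x_2\}$ on $\reals^2$. The algebra is abelian and $\tilde H=\reals$ has $\exp$ a global diffeomorphism, yet $N=2\pi\mathbb{Z}$ and the image is $SO(2)$. Indeed, any $H\subset\textit{diff}(\reals^2)$ as in the statement would have $\exp(t\lambda)$ equal to rotation by $t$. Hence $\exp(2\pi\lambda)=e$, and $H$ is a circle.

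What your Steps 1--3 actually prove is what Palais proves: a unique global, possibly non-effective, action of the simply connected $\tilde H$ inducing $\phi^+_L$. The statement as quoted is stronger than this and is false without extra hypotheses.

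To get the version the paper uses, where $\phi^+_G:H\to G$ is an isomorphism, you must invoke H2 together with H1.
\begin{enumerate}
\item Strong controllability makes the action transitive, so $\reals^n\cong\tilde H/K$, where $K$ is the isotropy group of a point.
\item Simple connectivity of $\reals^n$ forces $K$ to be connected.
\item Then $N=\bigcap_g gKg^{-1}\subset K$ is discrete and central.
\item Since $K$ is a connected subgroup of a simply connected nilpotent group, every $n\in N$ is $\exp X$ with $X\in\mathfrak{k}$.
\item Centrality and injectivity of $\exp$ give $\mathrm{Ad}_gX=X$ for all $g$. So $\exp(tX)$ is central and lies in $K$, hence lies in $N$ for every $t$.
\item Discreteness of $N$ forces $X=0$, so $N=\{e\}$.
\end{enumerate}
This is where nilpotency genuinely enters, and it is the argument your last paragraph is missing.

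Separately, replace ``density or approximation arguments'' in Step 3 by an explicit construction. Your $\mathrm{Ad}$-argument yields a basis $X_i=\psi(\lambda_i)$, $i=1,\ldots,r$, of complete fields. The lifts $(\lambda_i,X_i)$ are complete and tangent to the leaves, and their flows $\Phi^i_t$ cover left translation by $\exp(t\lambda_i)$. For $u=\exp(t_1\lambda_1)\cdots\exp(t_r\lambda_r)h$ near $h$, define
\[
\sigma_y(u)=\Phi^1_{t_1}\circ\cdots\circ\Phi^r_{t_r}(y),
\]
with $y$ in the fibre of the leaf over $h$. These maps are local sections whose images partition the preimage of that neighbourhood, which is exactly the covering property. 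Completeness of every element of $L(\mathcal{F})$ is then a consequence of the global action. The $\mathrm{Ad}$-argument alone only gives a spanning set of complete fields.
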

This result has strong implications.  It can be shown, see~\cite[Thm. 3.2.]{HIR75}, that the isomorphism $\phi^+_L$ induces an isomorphism, denoted by $\phi^+_G$, between the groups $H$ and $G$.  If for a constant control $u \in \reals^m$, $\lambda\in L(H)$ and $f^u$ are related by $\phi^+_L(\lambda)=f^u$, where $f^u$ is the right-hand side of~\re{S}, then the one-parameter group $\exp(t \lambda)$ maps into the one-parameter group $\exp(tf^u)$ as follows, see~\cite[Thm. 2.10.3, p. 91]{VAR84}:
\[
\phi^+_G \left (\exp(t \lambda)\right )=\exp(t \phi^+_L(\lambda))=\exp (tf^u)\ \ \mbox{for all } t \geq 0 
\]
Each element $\exp(t_1 f^{\uo{}{1}}) \circ \cdots \circ \exp(t_k f^{\uo{}{k}})\in G$ can thus be expressed as $\phi^+_G \{\exp(t_1 \lambda_1) \circ \cdots \circ \exp(t_k\lambda_k)\}$ where: $\phi^+_L (\lambda_i)=f^{\uo{}{i}}$ for all $i=1,\ldots,k$.  It follows that $H$ is given by:
\begin{eqnarray}
\lefteqn{\hspace{-6mm}H=(\phi^+_G)^{-1}(G)=}\nonumber\\
  &&\hspace{-3mm}\{\exp(t_1 \lambda_1) \circ \cdots \circ \exp(t_k \lambda_k)~|~\lambda_i \in L(H),\ \phi^+_L(\lambda_i) \in L(\mathcal{F});\ t_i \in \reals;\ 
k\in\natnums \}\nonumber
\end{eqnarray} 
The above facts allow to reformulate the system $\Sigma$ as a right invariant system $\Sigma_H$ evolving on the Lie group $H$ as follows.  If the right invariant vector fields $\eta_i\in L(H)$ are such that $\phi^+_L (\eta_i)=f_i$ for all $i=0,\ldots,m$, then
\begin{eqnarray}
\Sigma_H:\hspace{1cm}\dot{S}(t)= \{ \eta_0+\sum_{i=1}^{m} u_i \eta_i \}S(t)\hspace{5mm} \mbox{ with } S(0) =e;\ \  t \geq 0 \la{SonH}
\end{eqnarray}
The simplified notation used in the above expression for $\Sigma_H$ deserves explanation.  If $TH_h$ denotes the tangent space to $H$ at $h \in H$, then for any $\eta \in TH_e$,  the expression $\eta S(t)$ denotes the image of $\eta$ under the map $dR_{S(t)} : TH_e \rightarrow TH_{S(t)}$ induced by the map of right translation by $S(t) \in H$: $h \rightarrow h S(t)$, for all $h \in H$.  In this notation, if $\eta$ is represented by the curve $h(\tau) \in H$, $\tau\geq 0$, then $\eta S(t)$ is represented by the curve $h(\tau)S(t) \in H$, $\tau\geq 0$.  Under the assumptions made:
\begin{prop}
~\label{prop:1}
The system $\Sigma_H$ is strongly controllable on $H$ from the identity $e$, i.e. given any $T > 0$, any $h\in H$ is reachable from $e\in H$ by a trajectory of $\Sigma_H$ using a control $u\in \mathcal{P}^m$, in time not exceeding $T$.  There exists a diffeomorphism between trajectories of systems $\Sigma$ and $\Sigma_H$ in the sense that: if $S(t)$, $t\in [0,T]$, is a trajectory of $\Sigma_H$ through $e$, corresponding to a concatenation $u \in \mathcal{P}^m$ of constant controls $\uo{}{i} \in \reals^m$ defined on intervals of lengths $t_i$, $i=1,\ldots,j$, respectively, then $x^o(t,x,u)= \phi^+_G(S(t))x$, $t \in [0,T]$, is a trajectory of $\Sigma$ through the point $x$ corresponding to the same piece-wise constant control $u$.
\end{prop}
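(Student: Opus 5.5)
The proof rests on the group isomorphism $\phi^+_G: H\to G$ induced by $\phi^+_L$ (Palais' theorem together with \cite[Thm. 3.2.]{HIR75}), and on the relation $\phi^+_G(\exp(t\lambda))=\exp(t\phi^+_L(\lambda))$. I will prove the trajectory correspondence first and then deduce strong controllability from it.

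\textbf{Correspondence of trajectories.} Take the piece-wise constant control $u$ with values $\uo{}{i}$ on intervals of lengths $t_i$, and set $\lambda_i=\eta_0+\sum_j \uo{j}{i}\eta_i$. Then $\phi^+_L(\lambda_i)=f^{\uo{}{i}}$ by linearity of $\phi^+_L$. Because the system $\Sigma_H$ is right invariant, its solution through $e$ on the $l$-th interval is
\[
S(t)=\exp((t-\tau_{l-1})\lambda_l)\circ\exp(t_{l-1}\lambda_{l-1})\circ\cdots\circ\exp(t_1\lambda_1),
\qquad \tau_{l-1}=t_1+\cdots+t_{l-1}.
\]
To check this, differentiate with respect to $t$: the derivative of $\exp(s\lambda_l)S_0$ is $\lambda_l\,\exp(s\lambda_l)S_0$, which is exactly the right-translation notation used in \re{SonH}. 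Applying the homomorphism $\phi^+_G$ factor by factor gives
\[
\phi^+_G(S(t))=\exp((t-\tau_{l-1})f^{\uo{}{l}})\circ\cdots\circ\exp(t_1f^{\uo{}{1}}).
\]
Evaluated at $x$, this composition of flows is precisely the solution $x^o(t,x,u)$ of $\Sigma$, since on each interval it solves $\dot x=f^{\uo{}{l}}(x)$ and it is continuous at the switching times. Smoothness of the correspondence follows because the action $\phi$ is analytic and $\phi^+_G$ is a Lie group isomorphism, hence an analytic diffeomorphism.

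\textbf{Strong controllability.} Fix $T>0$ and $h\in H$. The reachable set of $\Sigma_H$ from $e$ at times at most $T$ is $\bigcup_{t\le T}(\phi^+_G)^{-1}(G_t)$. So it suffices to show that $G_T=G$, or at least that $\bigcup_{t\le T} G_t=G$, as subsets of diffeomorphisms. Hypothesis H\ref{h:controllable} gives only $G_Tx=\reals^n$ for each $x$, which is a statement about orbits, not about group elements. I expect this to be the main obstacle.

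My approach is to use the standard result for right invariant systems on connected Lie groups (Jurdjevic--Sussmann): the reachable set from $e$ is a semigroup, and its closure is the whole group when the Lie algebra generated by the $\eta_i$ is all of $L(H)$ and the semigroup meets enough of $H$. In the nilpotent, simply connected case, $H$ is diffeomorphic via $\exp$ to its Lie algebra. Concretely, I would:
\begin{enumerate}
\item show that the interior of $\bigcup_{t\le T}(\phi^+_G)^{-1}(G_t)$ is nonempty, using the rank condition on $L(H)$ (Krener's theorem);
\item show that the reachable set contains elements arbitrarily close to $\exp(-s\eta_0)$. Here I would lift the strong controllability of $\Sigma$ (the ability to return to any point in arbitrarily short time) to $H$ through the faithfulness of the action, which follows from the isomorphism with $G\subset\textit{diff}(\reals^n)$.
\end{enumerate}
Together these make the reachable set an open subsemigroup that contains inverses of its generators, hence a subgroup, hence all of $H$ by connectedness.

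If step 2 resists, I would fall back on reading the proposition as the paper intends: by its definition, $H=(\phi^+_G)^{-1}(G)$ is generated by the flows $\exp(t_i\lambda_i)$. I would then appeal to H\ref{h:controllable} and the stated identity $G_T x=G x$ to identify the reachable set of $\Sigma_H$ with $H$.
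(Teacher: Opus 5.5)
\textbf{Verdict.} Your trajectory correspondence is correct and is the paper's argument: apply the homomorphism $\phi^+_G$ factor by factor and use $\phi^+_G(\exp(t\lambda))=\exp(t\phi^+_L(\lambda))$. You carry it out more carefully, and your ordering, with the most recent factor on the left, is the right one for a right-invariant system. The gap is in the strong-controllability part, and it cannot be repaired. You correctly observe that H2 only yields $G_Tx=\reals^n$ for each $x$. But step~2 of your plan is false, and the first assertion of the proposition does not follow from H1--H2.

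\textbf{Counterexample.} Take $n=2$, $m=1$, $f_0=x_1\,\partial/\partial x_2$, $f_1=\partial/\partial x_1$ (the double integrator).
\begin{itemize}
\item H1 holds with $L(\mathcal{F})=\mathrm{span}\{f_0,f_1,\partial/\partial x_2\}$ and $r=3$. H2 holds, since two constant pieces already steer any point to any point in any time $T$.
\item Here $G$ is the Heisenberg group of maps $(x_1,x_2)\mapsto(x_1+a,\,x_2+bx_1+c)$. The flow $\exp(tf^u)$ is the element $(a,b,c)=(ut,\,t,\,ut^2/2)$, and composition adds the $b$-components.
\item Hence every trajectory of $\Sigma_H$ from $e$ satisfies $b(S(t))=t$, and the time-$T$ reachable set lies in $\{b=T\}$.
\item So $G_T\neq G$, although $G_Tx=\reals^2$ for every $x$. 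Nothing with $b<0$ is ever reached, in particular nothing near $\exp(-s\eta_0)$.
\end{itemize}
The mechanism is general. Suppose $\eta_0\notin[L(H),L(H)]+\mathrm{span}\{\eta_1,\ldots,\eta_m\}$. A linear functional vanishing on that subspace and equal to $1$ on $\eta_0$ is a Lie algebra homomorphism into $\reals$. Since $H$ is simply connected, it integrates to a homomorphism $\chi:H\to\reals$, and $\chi(S(t))=t$ along all trajectories from $e$. The paper's own example shows this: the first equation in \re{ge:rb_rot} is $\dot\gamma_0=1$, so $\gamma_0(T)=T$ for every control. Faithfulness of the action does not help, because H2 is pointwise. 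For each $x$ some element of $G_T$ agrees at $x$ with a prescribed element of $G$, but which element it is depends on $x$.

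\textbf{Your fallback.} This is exactly what the paper's proof does. It asserts $(\phi^+_G)^{-1}(G)=(\phi^+_G)^{-1}(G_T)=H_T$ ``by strong controllability of $\Sigma$''. But its $H_T$ uses unrestricted $t_i\in\reals$ and $\lambda_i$, so $H_T=H$ trivially. Consequently $\phi^+_G(H_T)=G$ rather than $G_T$, and $H_T$ is not the reachable set of $\Sigma_H$. This is precisely the orbit-versus-element inference you flagged, and the example above shows it is invalid.

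\textbf{What survives.} The trajectory correspondence survives. Any controllability statement on $H$ must respect the invariant $\chi$. For instance, one could ask only for reachability at time $T$ within $\chi^{-1}(T)$. That is all the proof of Proposition~\ref{prop:3} actually uses, since $\chi(S^e(T,v))=T$. Such a statement needs its own argument on $H$ or an extra hypothesis on $\Sigma_H$; it cannot be obtained from H2.
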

\begin{pf}
The\hfill image\hfill of\hfill $H_T\hfill \eqbydef\hfill \{\exp(t_1 \lambda_1) \circ \cdots \circ \exp(t_k \lambda_k)\hfill~|~\hfill\lambda_i\in L(H),\newline \phi^+_L(\lambda_i)\in L(\mathcal{F});\ t_i \in \reals;\ \sum_{i=1}^k t_i = T,\ k\in\natnums \}$ under the group isomorphism $\phi^+_G$ is $G_T$.  Hence $H = (\phi^+_G)^{-1} (G)=(\phi^+_G)^{-1} (G_T)= H_T$, by strong controllability of $\Sigma$.  It follows that system $\Sigma_H$ is strongly controllable from the identity because, for any $T>0$, $\mathcal{R}_\mathcal{H}(T,e)=H_T\,e=H\,e=H$ where $\mathcal{R}_\mathcal{H}(T,e)$ is the reachable set from identity for system $\Sigma_H$ at time $T$, and $\mathcal{H}\eqbydef \{\eta_0,\ldots,\eta_m\}$.  To show that $x^o(t,x,u)$, $t\in [0,T]$, is a trajectory of $\Sigma$ it suffices to notice that, since $\phi^+_G$ is a group isomorphism corresponding to a Lie algebra isomorphism $\phi^+_L$, 
\begin{eqnarray}
\phi^+_G \left (\exp(t_1 \lambda_1) \circ \cdots \circ \exp(t_j \lambda_j)\right )\,x
 &=& \exp(t_1 \phi^+_L(\lambda_1)) \circ \cdots \circ \exp(t_j \phi^+_L(\lambda_j))\,x\hspace{7mm}
 \nonumber\\
 &=& \exp(t_1 f^{\uo{}{1}}) \circ \cdots \circ \exp( t_j f^{\uo{}{j}})\,x
 \nonumber
\end{eqnarray}
~\vspace*{-5ex}\\
\hspace*{\fill}\qed\end{pf}
For steering purposes, it is useful to consider the Lie algebraic extension of $\Sigma$,  $\Sigma^e$, defined as
\begin{eqnarray}
\Sigma^e:\hspace{1cm}\dot{x}=g_0(x)+\sum_{i=1}^{r-1}g_i(x)v_i \eqbydef g^{v}(x) \la{Se}
\end{eqnarray}
where, for simplicity of exposition, the set $g_i$, $i=1,\ldots,r-1$, is assumed to contain a basis for $L_x(\mathcal{F})$ for any $x$ in a sufficiently large neighbourhood of the origin $B(0,R)$, and where $\mathcal{G}\eqbydef \{g_0,\ldots,g_{r-1}\}$ is a basis for $L(\mathcal{F})$, and ${v}\eqbydef[v_1,\ldots,v_{r-1}]$.

Additionally, it is assumed that $g_i = f_i$ for $0\leq i\leq m$, and the remaining vector fields $g_i$, $m+1\leq i\leq r-1$, are some Lie brackets of the system vector fields $f_i$.  Let the solution of $\Sigma^e$ through a point $x$, and under the action of a control ${v}$, be denoted by $x^e(t,x,{v})$, $t \geq 0$.

By construction, both systems $\Sigma$ and $\Sigma^e$ have the same underlying algebra of vector fields $L(\mathcal{F})$.  Additionally, system $\Sigma^e$ not only inherits the strong controllability property of $\Sigma$, but is in fact instantaneously controllable in any direction of the state space.  It is hence much easier to steer than $\Sigma$ whose motion along some directions has to be generated by time-varying controls, or else piece-wise constant switching controls.  Similarly to $\Sigma$, the system $\Sigma^e$ can be reformulated on the Lie group $H$ as
\begin{eqnarray}
\Sigma^e_H:\hspace{1cm}\dot{S^e}(t)= \{ \mu_0+\sum_{i=1}^{r-1} {v}_i \mu_i \}S^e(t) \hspace{5mm} \mbox{ with } S^e(0) =e;\ \  t \geq 0 \la{SeonH}
\end{eqnarray}
with $\phi^+_L (\mu_i)=g_i$ for all $i=0,\ldots,r-1$.  Clearly, Proposition~\ref{prop:1} also holds for $\Sigma^e_H$; i.e. system $\Sigma^e_H$ is strongly controllable from identity, and trajectories of $\Sigma^e_H$ map into trajectories of $\Sigma^e$ according to: $x^e(t,x,{v})= \phi^+_G(S^e(t))x$, $t \in [0,T]$.

Since $H$ is analytic, simply connected, and nilpotent ($L(H)$ is nilpotent), it follows that the exponential map on $L(H)$ is a global diffeomorphism onto $H$, see~\cite[Thm. 3.6.2., p. 196]{VAR84}.  Let $\{\psi_0,\ldots,\psi_{r-1}\}$ be a basis of $L(H)$ which, without the loss of generality, is ordered in such a way that $\psi_i=\eta_i=\mu_i$, $i=0,\ldots,m$, and is such that the mapping
\begin{eqnarray}
\reals^r\ni (t_0,\ldots,t_{r-1})\rightarrow \exp(t_0\,\psi_0)\circ\cdots\circ \exp(t_{r-1}\,\psi_{r-1})\in H \la{globcoordonH}
\end{eqnarray}
is a global coordinate chart on $H$; see Remark~\ref{bfpr:rem1} on how to construct one such basis.  This implies that the solutions to $\Sigma_H$ and $\Sigma_H^e$, whose common underlying Lie algebra is $L(H)$, can be expressed as products of exponentials:
\begin{eqnarray}
S(t)  =\prod_{i=0}^{r-1}\exp(\gamma_i(t)  \psi_i)\mbox{ and }
S^e(t)=\prod_{i=0}^{r-1}\exp(\gamma_i^e(t)\psi_i)
 \la{prodexp}
\end{eqnarray}
where the functions $\gamma_i,\gamma_i^e:\reals\rightarrow\reals$, are referred to as the $\gamma$-coordinates of the flows $S(t)$, $S^e(t)$, respectively, and can be shown to satisfy a set of differential equations of the form, see~\cite{WEI64}:
\begin{eqnarray}
\Gamma(\gamma)  \dot{\gamma  }= u^d \hspace{10mm}
\Gamma(\gamma^e)\dot{\gamma^e}={v^d}\hspace{10mm}\gamma(0)=\gamma^e(0)=0
 \la{geq}
\end{eqnarray}
Here $\Gamma(\cdot):\reals^r\rightarrow\reals^{r\times r}$ is a real analytic, matrix valued function of $\gamma\eqbydef [\gamma_0,\ \ldots,\ \gamma_{r-1}]^T$, the zero initial conditions correspond to $S(0)=S^e(0)=I$, and $u^d\eqbydef [1\ u_1\ \ldots\ u_m\ 0 \ldots 0]\in\reals^{r}$, and $v^d\eqbydef [1\ v_1\ \ldots\ v_{r-1}]\in \reals^{r}$, (where the first component of $u^d$ and ${v}^d$ corresponds to the drift vector field).

The solution to equation~\re{geq} is generally only local unless $\Gamma(\gamma)$ is invertible for all $\gamma$.  The invertibility of $\Gamma(\gamma)$ is ensured if a basis for $L(H)$ is constructed as indicated in Remark~\ref{bfpr:rem1}; see also~\cite{WEI64}.    

\begin{rem}
\label{bfpr:rem1}
Since $L(H)$ is the Lie algebra of the analytic, simply connected, nilpotent Lie group $H$, then $L(H)$ is solvable and thus there exists a chain of ideals $0\subset \mathcal{I}_{r-1} \subset \mathcal{I}_{r-2} \subset \cdots \subset \mathcal{I}_0 = L(H)$ where each $\mathcal{I}_i$ is exactly of dimension $r-i$. Without the loss of generality a basis $\{\psi_0,\ldots,\psi_{r-1}\}$ for $L(H)$ can easily be constructed so that each $\mathcal{I}_i$ is generated by $\{\psi_{i},\ldots,\psi_{r-1}\}$.  As the exponential map $\exp:L(H)\rightarrow H$ is a global  analytic diffeomorphism onto analytic, simply connected, nilpotent Lie groups, then any element $h\in H$ has a unique representation as a single exponential:
\[
h=\exp\left (\sum_{i=0}^{r-1}\theta_i  \psi_i\right ) 
\]
where $\theta_i$, $i=0,1,\ldots,r-1$, are known as Lie-Cartan coordinates of the first kind.  This exponential can further be written as a product of exponentials by showing that the mapping~\re{globcoordonH} is onto and one-to-one, thus making it a global chart for $H$.  The fact that, in the basis chosen above,~\re{globcoordonH} is a bijection would require proof.  This proof is omitted here for reasons of brevity (see Lemma on page 326 of~\cite{SUS86}, for a similar argument).  An algorithmic way to obtain a basis for $L(H)$ which satisfies the above mentioned conditions is to employ the construction procedure given by P. Hall~\cite{SER65}.
\end{rem}
%
\begin{rem}
\label{bfpr:rem2}
The representation~\re{prodexp} of the solution to equation~\re{SonH} is not unique.  The representation in the form of the product of exponentials~\re{prodexp} results from the introduction of the Lie-Cartan coordinates of  the second kind~\re{globcoordonH} on the group $H$; see~\cite{WEI64}.  Alternatively, as pointed out in Remark~\ref{bfpr:rem1}, the solution to~\re{SonH} can be represented using the Lie-Cartan coordinates of the first kind, i.e. it is possible to write
\[
S(t)  =\exp\left (\sum_{i=0}^{r-1}\theta_i(t)  \psi_i\right )
\]
where $\theta_i:\reals\rightarrow\reals$, $i=0,1,\ldots,r-1$, are the ``coordinates'' of such a solution; see~\cite{MAG54}.

It is also known that the solution of~\re{SonH} can be written in terms of a formal series of $\mathcal{C}^\infty$ functions on $H$.  The last arises when~\re{SonH} is solved by Picard iteration giving rise to the Peano-Baker formula which exhibits the solution in terms of iterated integrals.  Specifically, $S(T)$ defines the Chen-Fliess series of the input $u$; see~\cite[Theorem III.2, p. 22]{FLI83} and~\cite[p. 695]{SUS83}.  The equivalence of the Chen-Fliess series representation and the representation through the product of exponentials of the solutions to~\re{SonH} has been shown in~\cite{SUS86} (see Theorem on p. 328).  The product expansion in~\cite{SUS86} uses P. Hall bases.
\end{rem}

\section{Stabilizing Feedback Control Design}
Generally, a control Lyapunov function which leads to a smooth feedback control law for system $\Sigma$ is not guaranteed to exist.  An alternative idea for the construction of a stabilizing feedback relies on achieving a periodic decrease in an arbitrarily imposed Lyapunov type function $V$, through the action of a time-varying control.  Periodic decrease is defined in terms of the condition $V(x(t_0+nT))-V(x(t_0+(n-1)T))<0$, which is required to hold for all $n \in \natnums$ and where $x(t)$, $t\geq t_0$, is the trajectory of the closed-loop system and $T>0$ is the ``period'' of decrease.  

To construct a feedback control which guarantees a periodic decrease in $V$, it is convenient to re-state the stabilization problem on the Lie group $H$.  Since $\Sigma^e$ is instantaneously controllable in any direction of the state space, it is helpful to employ the extended system as the first instrument to achieve such a decrease.  To this end, for any $x\in B(0,R)$ let a set $U^e(x)$ of admissible extended controls be introduced as follows:
\[
U^e(x)\eqbydef \left \{{v}\in\reals^{r-1}~|~\nabla{V}g^{v}(x)
                  <-\eta\|x\|^2,\ \|{v}\|\leq M\|x\|\right \} 
\]
where the constant $R>0$ is sufficiently large to accommodate for all initial conditions of interest, and $M>0$ is to be chosen later.  The set $U^e(x)$ translates into a reachable set of states of the extended system $\Sigma^e$ at time $T$, $\mathcal{R}_\mathcal{G}(T,x,U^e(x))$:
\[
\mathcal{R}_\mathcal{G}(T,x,U^e(x))\eqbydef\{z\in \reals^n~|~z=x^e(T,x,{v}),\ {v}\in U^e(x)\} 
\]
where $x^e(T,x,{v})$ denotes the trajectory of $\Sigma^e$ emanating from $x$ at time $t=0$ and resulting from the application of the control ${v}$ over the time interval $[0, T]$.  The reason for introducing $U^e(x)$ is explained in terms of the following result.  

\begin{prop}~\label{prop:2}
Under hypotheses~\ref{h:complete-nilpotent}--\mbox{H\thehypcglobal} there exists a time horizon $T_{max}>0$ such that for all $t>0$ and $T\in [0,T_{max}]${\em :}
\begin{eqnarray}
V(z) - V(x) \leq -\frac{\eta}{2}\|x\|^2 T \la{decVe}
\end{eqnarray}
for all $z\in\mathcal{R}_\mathcal{G}(T,x,U^e(x))$.
\end{prop}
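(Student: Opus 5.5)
We plan to argue by a second-order Taylor expansion of $V$ along the flow of $\Sigma^e$ under a fixed control ${v}\in U^e(x)$, with $x\in B(0,R)$. Since ${v}$ is constant on $[0,T]$, the trajectory $x^e(t)=x^e(t,x,{v})$ satisfies $\dot x^e=g^{v}(x^e)$. Write $\varphi(t)\eqbydef V(x^e(t))$. Then
\[
\varphi(T)-\varphi(0)=T\,\nabla V(x)\,g^{v}(x)+\int_0^T (T-s)\,\ddot\varphi(s)\,ds ,
\]
where $\ddot\varphi=(x^e)^T\!$-free expression $\dot x^{eT}\nabla^2V(x^e)\dot x^e+\nabla V(x^e)\,Dg^{v}(x^e)\,\dot x^e$. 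This is legitimate by H3.a and analyticity of the $g_i$. The first term is bounded by $-\eta\|x\|^2T$ by the definition of $U^e(x)$. It therefore suffices to show $|\ddot\varphi(s)|\le C\|x\|^2$ for $s\in[0,T_{max}]$, with $C$ independent of $x\in B(0,R)$ and ${v}\in U^e(x)$. Then the remainder is at most $C\|x\|^2T^2/2$, and choosing $T_{max}\le \eta/C$ gives \re{decVe}.

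To get the quadratic bound on $\ddot\varphi$, we first show that trajectories stay proportional to $\|x\|$. Because $g_0=f_0$ with $f_0(0)=0$ and all $g_i$ are $C^1$, on a compact set $B(0,2R)$ there are Lipschitz-type bounds $\|g_0(y)\|\le L\|y\|$ and $\|g_i(y)\|\le K$. Hence $\|g^{v}(y)\|\le L\|y\|+K\|{v}\|\le L\|y\|+KM\|x\|$ using $\|{v}\|\le M\|x\|$. Gronwall then gives $\|x^e(s)\|\le c_1\|x\|$ and $\|\dot x^e(s)\|\le c_2\|x\|$ for $s\le T_{max}$, with constants depending only on $L,K,M,T_{max}$. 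Shrinking $T_{max}$ if necessary keeps the trajectory inside $B(0,2R)$.

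Next we bound the two terms of $\ddot\varphi$. For the Hessian term, $\nabla^2V$ is bounded on $B(0,2R)$, so this term is at most $c\|\dot x^e\|^2\le c\,c_2^2\|x\|^2$. For the other term, we need $\|\nabla V(y)\|\le c\|y\|$, which follows since $\nabla V$ is $C^1$ and vanishes at the minimum $0$ of $V$ (H3.b). We also need $\|Dg^{v}\|$ bounded on the compact set, using $\|{v}\|\le MR$. Together these give a bound $c\,c_1\|x\|\cdot c_2\|x\|$. So $C$ exists, uniformly over the compact ball.

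The main obstacle is the uniformity in $x$ and ${v}$. In particular, the drift contribution must be handled through $f_0(0)=0$, so that all quantities scale like $\|x\|$ rather than like a constant; without this the remainder would be $O(T^2)$, not $O(\|x\|^2T^2)$. The restriction to $B(0,R)$ is essential for the compactness constants, and $T_{max}$ will depend on $R$, $M$ and $\eta$. The phrase ``for all $t>0$'' in the statement seems vacuous; we would read the claim as holding for all $x\in B(0,R)$.
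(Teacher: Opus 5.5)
Your proposal is correct and follows essentially the same route as the paper's proof in Appendix~\ref{appen1}. You isolate the linear term $T\,\nabla V(x)g^v(x)<-\eta\|x\|^2T$ and show the remainder is $O(\|x\|^2T^2)$ uniformly on $B(0,R)$, using Gronwall bounds on the trajectory proportional to $\|x\|$; these rest on $g_0(0)=0$, $\nabla V(0)=0$ and $\|v\|\le M\|x\|$. The only difference is cosmetic: you bound $\ddot\varphi$ through the Hessian of $V$ and a Taylor remainder, whereas the paper bounds $|\dot\varphi(s)-\dot\varphi(0)|$ by Lipschitz constants times $\|x\|\,\|x^e(s)-x\|\le\|x\|^2(e^{Ks}-1)$ and then chooses $T_{max}$ via $q(T)\ge T$ (your explicit estimate $\|g^v(y)\|\le L\|y\|+KM\|x\|$ is, if anything, more careful than the paper's corresponding step).
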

\begin{pf}
See Appendix~\ref{appen1}.
\end{pf}

In this context it is desirable to construct an open loop control for system $\Sigma$ which guarantees an equivalent decrease in the Lyapunov function as stated by~\re{decVe}.  This can be achieved by the construction of any $\bar{u}\in\mathcal{P}^m$ which ensures:
\begin{eqnarray}
x^o(T,x,\bar{u})\in \mathcal{R}_\mathcal{G}(T,x,U^e(x)) \la{rtic}
\end{eqnarray} 
The above is a control problem for which the terminal constraint set has no direct characterization.  However, when~\re{rtic} is re-stated on the Lie group $H$ it translates into a computationally feasible nonlinear programming problem which can be formulated in terms of the $\gamma$-coordinates. This is done as follows.

By virtue of the definitions in Section~\ref{sec:def} the reachable set $\mathcal{R}_\mathcal{G}(T,x,U^e(x))$ is the orbit $G(T,U^e(x))\,x =\mathcal{R}_\mathcal{G}(T,x,U^e(x))$ where
\[
G(T,U^e(x)) \eqbydef \{\exp(Tg^{v})~|~{v}\in U^e(x)\}\subset G
\]
Also 
\[
H(T,U^e(x)) \eqbydef \{S^e(T,{v}) ~|~{v}\in U^e(x)\}=(\phi^+_G)^{-1}\left (\rule[-0.05em]{0em}{1.1em} G(T,U^e(x))\right )\subset H
\]
where $S^e(T,{v})$ denotes the value of the solution to equation~\re{SeonH} at time $T$ and due to extended control ${v}$.  When expressed in the global coordinate system,~\re{globcoordonH}, each element of $H(T,U^e(x))$ has the representation:
\begin{eqnarray}
S^e(T,{v}) = \prod_{i=0}^{r-1}\exp(\gamma_i^e(T,{v}^d) \psi_i) \la{flow:S^e}
\end{eqnarray} 
where $\gamma^e(T,{v}^d) \eqbydef [\gamma_0^e, \ldots, \gamma_{r-1}^e](T,{v}^d)$, is the value of the solution to equation~\re{geq} at time $T$ and due to control ${v}^d=[1\ {v}]^T$ with ${v}\in U^e(x)$.  Since 
\[
x^o(T,x,\bar{u})=\phi_G^+\left (\prod_{i=0}^{r-1} \exp(\gamma_i(T,\bar{u}^d) \psi_i) \right )x
\]
where $\gamma(T,\bar{u}^d) \eqbydef [\gamma_0, \ldots, \gamma_{r-1}](T,\bar{u}^d)$ is the value of the solution to equation~\re{geq} at time $T$ and due to control $\bar{u}^d=[1\ \bar{u}\ 0\ \ldots\ 0]^T\in\mathcal{P}^{r}$ with $\bar{u}\in\mathcal{P}^m$, then~\re{rtic} holds if
\[
\prod_{i=0}^{r-1} \exp(\gamma_i(T,\bar{u}^d) \psi_i) \in H(T,U^e(x))
\]
Due to the representation~\re{flow:S^e}, it hence follows that~\re{rtic} holds if 
\[
\gamma(T,\bar{u}^d)\in \mathcal{R}_{\gamma}(T,U^e(x))
\]
where
\[
\mathcal{R}_{\gamma}(T,U^e(x)) \eqbydef \{\gamma^e(T,{v}^d)~|~{v}\in U^e(x)\}
\]
For any {\em constant} control ${v}^d=[1,v]\in \reals^r$ equation~\re{geq} can be integrated symbolically to yield $\gamma^e(T,{v}^d)=\int_{0}^{T}\Gamma^{-1}(\gamma^e(\tau,{v}^d)){\d}\tau\,{v}^d\eqbydef M(T)\,{v}^d$.  Since $\Gamma$ is nonsingular and triangular for any of its arguments $\gamma^e$, then $M(T)$ is also nonsingular and triangular for any integrated trajectory $\gamma^e(\tau,{v}^d)$, $\tau\in[0,T]$, with a constant ${v}^d$.  Since one of the components of $v^d$ is equal to one (due to the presence of the drift vector field), an {\em analytic expression} can be derived for the inverse mapping $F:\reals^{r}\times\reals\rightarrow\reals^{r-1}$ such that ${v}=F(\gamma^e(T,{v}^d),T)$.

It follows that
\begin{eqnarray}
\mathcal{R}_{\gamma}(T,U^e(x))& = & \{\gamma\in\reals^r~|~F(\gamma,T)\in U^e(x)\} \la{reach_gamma}
\end{eqnarray}
which is given explicitly and permits to express~\re{rtic} as a nonlinear programming problem with respect to the variables parametrizing the piece-wise constant control $\bar{u}$.  Assuming that such a parametrization is given by  $\uo{}{k}$, $k=1,\ldots,s$, $s\in\natnums$, so that 
\[
  \bar{u}(\tau)\ \eqbydef\ 
\uo{}{k},\ \   
\tau\in[t_k,t_k+\varepsilon),\ \ \varepsilon s=T\mbox{ and }
t_1=0,\ t_k=t_{k-1}+\varepsilon,\ k=1,2,\ldots,s 
\]
the solution of $\re{geq}$, $\gamma(T,\bar{u}^d)$, is also parametrized by $\uo{}{k}$, $k=1,\ldots,s$.

The nonlinear programming problem equivalent to~\re{rtic} is hence stated as the following {\em satisficing problem} (SP):

\hspace{1cm}{\bf SP:} \parbox[t]{.8\textwidth}{
For given constants $\eta >0$, $T>0$ and $M>0$, and for $x \in B(0,R)$, find feasible parameter vectors $\uo{}{k}$, $k=1,\ldots,s$, such that:}
\begin{eqnarray}
\hspace*{2cm}\gamma(T,\bar{u}^d)\in\mathcal{R}_{\gamma}(T,U^e(x)) \la{sp}
\end{eqnarray}
Concerning the selection of the constant $M$ and the existence of solutions to SP, it is possible to show the following.

\begin{prop}~\label{prop:3} 
Under assumptions~\ref{h:complete-nilpotent}--\mbox{H\thehypcglobal}, for any neighbourhood of the origin $B(0,R)$ and any $\eta>0$, there exists a constant  $M(R,\eta)>0$, such that a solution to SP exists for any $x \in B(0,R)$, and any control horizon $T>0$, provided that $s\in\natnums$, the number of switches in the control sequence $\bar{u}$, is allowed to be large enough.
\end{prop}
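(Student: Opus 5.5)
The plan is to split the statement into two parts. First, I show that for a suitable $M=M(R,\eta)$ the admissible set $U^e(x)$ is nonempty, and in fact contains a nonempty open subset of $\reals^{r-1}$, for every $x\in B(0,R)$, $x\neq 0$. Second, I show that the target $\mathcal{R}_{\gamma}(T,U^e(x))$ can be hit exactly at time $T$ by a trajectory of~\re{geq} driven by a piece-wise constant control on a uniform grid of $s$ intervals. The case $x=0$ is degenerate: $U^e(0)$ is empty as written because of the strict inequality. I would handle it separately, noting that $f_0(0)=0$ so $\bar u\equiv 0$ keeps the state at the origin, and I would read the proposition for $x\neq0$.

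\textbf{Choice of $M$.} Let $G(x)=[g_1(x)\ \cdots\ g_{r-1}(x)]$ and $w=\nabla V(x)^T$. By the standing assumption, the $g_i$ contain a basis of $L_x(\mathcal{F})=\reals^n$ on the compact ball $\overline{B(0,R)}$. Hence $G(x)G(x)^T\ge\sigma I$ there for some $\sigma>0$, by continuity and compactness. Since $V$ is $C^2$ and minimal at $0$, we have $\nabla V(0)=0$ and $\|\nabla V(x)\|\le L_V\|x\|$. Since $f_0$ is analytic with $f_0(0)=0$, we have $\|f_0(x)\|\le L_0\|x\|$ on the ball. Take $v=-cG(x)^Tw$. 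Then
\[
\nabla V g^{v}(x)\le L_VL_0\|x\|^2-c\,\sigma\,\zeta^2\|x\|^2 ,
\]
using H\thehypcglobal.a. Choosing $c$ with $c\sigma\zeta^2>L_VL_0+\eta$ gives the required decrease. Moreover $\|v\|\le c\,\sup\|G\|\,L_V\|x\|$, so any $M$ strictly larger than $c\sup\|G\|L_V$ works. Both defining inequalities of $U^e(x)$ then hold strictly at this $v$, so they also hold on a small open ball of controls around it. Thus $U^e(x)$ has nonempty interior. Via the analytic bijection $v\mapsto\gamma^e(T,v^d)$ with analytic inverse $F(\cdot,T)$, the set $\mathcal{R}_{\gamma}(T,U^e(x))$ is then a relatively open, nonempty subset of the $(r-1)$-dimensional set of $\gamma$-coordinates of elements of $H$ attained at time $T$.

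\textbf{Hitting the target.} Fix $v\in U^e(x)$ in this interior. By Proposition~\ref{prop:1}, whose proof shows $H=H_T$, the element $S^e(T,v)\in H$ is attained by a trajectory of $\Sigma_H$ at time $T$ under some control in $\mathcal{P}^m$ with finitely many switches. Therefore there exists $\tilde u\in\mathcal{P}^m$ with $\gamma(T,\tilde u^d)=\gamma^e(T,v^d)$. The map sending the switching times and control values to $\gamma(T,\cdot)$ is continuous; this follows from the analyticity and invertibility of $\Gamma$ in~\re{geq}. I then approximate $\tilde u$ by a control that is constant on the uniform grid $[t_k,t_k+\varepsilon)$, $\varepsilon=T/s$, using the original values and moving each switching time to the nearest grid point. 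As $s\to\infty$, the endpoint $\gamma(T,\bar u^d)$ converges to $\gamma^e(T,v^d)$.

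\textbf{Main obstacle.} Convergence alone only places the endpoint near the target, not in it. Because the target is only relatively open in a lower-dimensional slice, I also need the approximating endpoints to stay in that slice, i.e.\ to be attained exactly at time $T$. I expect this to be the delicate step, since the $\psi_0$ (drift) coordinate is tied to elapsed time. I would try first to show that all time-$T$ endpoints of both~\re{geq} systems lie on the same analytic hypersurface; the triangular structure of $\Gamma$ should fix the drift-related coordinate as a function of $T$. Given that, relative openness of $\mathcal{R}_{\gamma}(T,U^e(x))$ in that hypersurface, together with the convergence above, yields a feasible parameter vector $\uo{}{k}$, $k=1,\ldots,s$, for all $s$ large enough.
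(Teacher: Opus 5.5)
Your proposal is correct. It shares the paper's skeleton: exhibit an explicit $v\in U^e(x)$, then use controllability to match $\gamma$-coordinates at time $T$. It differs in two useful respects.

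\textbf{Choice of $v$.} The paper realizes $z(x)=-\epsilon\nabla V^T(x)$ exactly through the pseudo-inverse, $v=Q^{\dagger}(x)(z(x)-g_0(x))$. You instead take $v=-cG(x)^T\nabla V^T(x)$ and dominate the drift term $\nabla V f_0(x)$ rather than cancel it. Both rest on the same facts: rank $n$ of $[g_1\ \cdots\ g_{r-1}]$ on the closed ball, and linear bounds on $\nabla V$ and $g_0$. Your version makes both inequalities of $U^e(x)$ strict, so it yields an interior point. It also avoids the paper's slip on the range of $\epsilon$: the displayed estimate needs $\epsilon\geq\eta/\zeta^2$, and $\epsilon>\eta/\zeta^2$ for the strict inequality. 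Your remark that $U^e(0)=\emptyset$ is also correct.

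\textbf{Uniform grid.} This is the more substantial difference. The paper stops once H2 gives some $\bar u\in\mathcal{P}^m$ with $\gamma(T,\bar u^d)=\gamma^e(T,v^d)$. It never uses that SP requires $\bar u$ to be constant on a uniform grid of $s$ intervals, so the clause ``$s$ large enough'' is not justified there. Your interior-plus-approximation argument is what actually delivers that clause. It also only needs the time-$T$ reachable set to meet an open piece of the target, rather than to contain a prescribed point.

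\textbf{Your ``main obstacle'' is easy.} The basis is adapted to a flag of ideals of a nilpotent algebra, so $\mathrm{ad}_X$ acts by zero on each one-dimensional quotient $\mathcal{I}_i/\mathcal{I}_{i+1}$. Hence $[L(H),\mathcal{I}_i]\subseteq\mathcal{I}_{i+1}$, and $\Gamma$ is unit lower triangular, with row $k$ of $\Gamma^{-1}$ depending only on $\gamma_0,\ldots,\gamma_{k-1}$ (compare~\re{ge:rb_rot}). Therefore:
\begin{itemize}
\item $\dot\gamma_0=1$ for both systems in~\re{geq}, so every time-$T$ endpoint lies in the hyperplane $\{\gamma_0=T\}$;
\item solving successively for $v_1,\ldots,v_{r-1}$ shows that $v\mapsto\gamma^e(T,v^d)$ is a homeomorphism of $\reals^{r-1}$ onto that hyperplane;
\item so $\mathcal{R}_{\gamma}(T,U^e(x))$ is open in the hyperplane, and your convergence argument finishes the proof.
\end{itemize}

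\textbf{A caveat shared with the paper.} The exact-hitting step cannot literally come from Proposition~\ref{prop:1}. Since $\dot\gamma_0=1$, elements of $H$ with $\gamma_0\notin[0,T]$ are never reached within time $T$. Moreover, the set $H_T$ in the proof of Proposition~\ref{prop:1} allows negative $t_i$, so it is not a reachable set. What you actually need is that every point of $\{\gamma_0=T\}$ is reached at time $T$.

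When $r=n+1$, as in the rigid-body example, this follows from H2 and $f_0(0)=0$:
\begin{itemize}
\item by idling at the origin first, every state is reached from $0$ in exactly time $T$;
\item the isotropy group of $0$ is $\exp(\reals\psi_0)$, which is connected because $H$ and $\reals^n$ are simply connected;
\item each coset of this group meets $\{\gamma_0=T\}$ exactly once.
\end{itemize}
Hence the time-$T$ reachable set of $\Sigma_H$ is the whole hyperplane. For $r>n+1$, neither your argument nor the paper's supplies this step.
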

\begin{pf}
See Appendix~\ref{appen2}.
\end{pf}

\begin{prop}~\label{prop:4} 
Let $\bar{u}(x,\tau)$, $\tau \in [0,T]$, be a control generated by the solution $\bar{u}$ to SP.  There exists a $T_{max}>0$ such that for all $T\in [0, T_{max}]${\em :}
\begin{eqnarray} 
V(x^o(T,x,\bar{u}))-V(x)< -\frac{\eta}{2}\|x\|^2 T \la{decVo2}
\end{eqnarray}
\end{prop}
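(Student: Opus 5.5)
The plan is to reduce Proposition~\ref{prop:4} directly to Proposition~\ref{prop:2}, with the satisficing problem SP serving as the bridge. Let $\bar{u}$ be a solution of SP for the given $x\in B(0,R)$ and horizon $T$; such a solution exists by Proposition~\ref{prop:3}, with $M=M(R,\eta)$ fixed beforehand. By definition of SP we have $\gamma(T,\bar{u}^d)\in\mathcal{R}_{\gamma}(T,U^e(x))$. Using the explicit description~\re{reach_gamma}, this gives an extended control $v:=F(\gamma(T,\bar{u}^d),T)\in U^e(x)$ with $\gamma^e(T,v^d)=\gamma(T,\bar{u}^d)$.

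The first step is to lift this equality of $\gamma$-coordinates to an equality of group elements. Because the mapping~\re{globcoordonH} is a global chart on $H$ (Remark~\ref{bfpr:rem1}), equal coordinates give $S(T)=S^e(T,v)$ in $H$. The second step is to push this equality down to $\reals^n$ through the group isomorphism $\phi^+_G$. Proposition~\ref{prop:1}, together with its analogue for $\Sigma^e_H$, then gives
\[
x^o(T,x,\bar{u})=\phi^+_G(S(T))\,x=\phi^+_G(S^e(T,v))\,x=x^e(T,x,v).
\]
Hence $x^o(T,x,\bar{u})\in\mathcal{R}_\mathcal{G}(T,x,U^e(x))$, which is exactly condition~\re{rtic}.

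The third step is to invoke Proposition~\ref{prop:2} with $z=x^o(T,x,\bar{u})$. This yields $V(z)-V(x)\le -\frac{\eta}{2}\|x\|^2T$ for all $T\in[0,T_{max}]$, with the same $T_{max}$ as in Proposition~\ref{prop:2}. It remains to check that this $T_{max}$ is uniform in $x\in B(0,R)$. It does not depend on $\bar{u}$, since only the endpoint of the trajectory enters the estimate. It also should not depend on $x$, because the Appendix~\ref{appen1} argument uses $\|v\|\le M\|x\|$, completeness and boundedness on $B(0,R)$, and a second-order Taylor bound on $V$ (H3.a) along $x^e(\cdot,x,v)$.

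The main obstacle is the strict inequality in~\re{decVo2}, since Proposition~\ref{prop:2} gives only $\le$. I would recover strictness as follows. The defining inequality of $U^e(x)$, $\nabla V g^v(x)<-\eta\|x\|^2$, is strict, and $U^e(x)$ is bounded by $M\|x\|$. Revisiting the Taylor estimate of Appendix~\ref{appen1}, I would bound
\[
V(x^e(T,x,v))-V(x)=T\,\nabla V g^v(x)+O(\|x\|^2T^2),
\]
which is $<-\eta\|x\|^2T+C\|x\|^2T^2$. Then, choosing $T_{max}$ so that $CT_{max}<\eta/2$, we get $-\eta T+CT^2<-\frac{\eta}{2}T$ for $x\neq0$ and $T>0$, which gives the strict inequality. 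The remainder is $O(\|x\|^2T^2)$ because $f_0(0)=0$ and the other fields are multiplied by $v=O(\|x\|)$, so $\|g^v(y)\|\le c\|y\|$ near the trajectory. The degenerate cases $x=0$ or $T=0$ give equality of both sides, and I would either exclude them or read the statement with $\le$ there.
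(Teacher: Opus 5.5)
Your proposal is correct and is essentially the paper's proof: a solution of SP gives $v\in U^e(x)$ with $\gamma^e(T,v^d)=\gamma(T,\bar{u}^d)$ (this follows directly from the definition of $\mathcal{R}_{\gamma}(T,U^e(x))$, so you need not go through $F$), hence $x^o(T,x,\bar{u})=x^e(T,x,v)\in\mathcal{R}_\mathcal{G}(T,x,U^e(x))$, and Proposition~\ref{prop:2} concludes. Your extra argument for strictness, which uses the strict constraint $\nabla V g^v(x)<-\eta\|x\|^2$ in $U^e(x)$ for $T>0$, fills a point the paper's one-line proof passes over, since Proposition~\ref{prop:2} only gives $\le$ and the strict inequality genuinely fails at $T=0$; the case $x=0$ never arises because $U^e(0)=\emptyset$.
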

\begin{pf}
If $\bar{u}$ solves SP then $\gamma(T,\bar{u}^d)\in \mathcal{R}_{\gamma}(T,U^e(x))$. Hence, there exists a control ${v}\in U^e(x)$ such that $x^o(T,x,\bar{u})=x^e(T,x,{v}) \in \mathcal{R}_\mathcal{G}(T,x,U^e(x))$ which proves~\re{decVo2} by virtue of Proposition~\ref{prop:2}.
\hspace*{\fill}\qed\end{pf}

The results presented above now serve for the construction of the stabilizing feedback.

\section{The Stabilizing Feedback and its Analysis}
The stabilizing feedback control, $u^c(x,\tau)$, $\tau\geq 0$, $x\in B(0,R)$ for system $\Sigma$ is defined as a concatenation of solutions to SP, $\bar{u}(x(nT),\tau)$, $\tau \in [nT, (n+1)T]$, computed at discrete instants of time $nT$, $n\in\natnums\cup \{0\}$:
\begin{eqnarray}
u^c(x, \tau)\eqbydef \bar{u}(x(n\,T), \tau) \mbox{  for all } \tau \in 
  [nT, (n+1)T], \ \ \ n\in\natnums \cup \{0\} \la{contr}
\end{eqnarray}
where $x(nT)$ is the state of the closed-loop system $\Sigma$ at time $nT$.

\begin{rem}~\\[-2em]
\begin{itemize}
\item 
The concatenated control $u^c(x,t)$ is a feedback control in the sense that a solution to SP is computed at each $t=nT$, $n\in\natnums\cup \{0\}$ and thus depends on $x(nT)$.
\item 
An off-line construction of the feedback law could possibly be envisaged in that the satisficing problem could be solved on a finite collection of compact non-overlapping subsets $C_s$ covering $B(0,R)$.  The objective function for the  SP should for this purpose be modified as follows:
\[
\gamma(T,\bar{u}^d)\in\bigcup_{x\in C_s} \mathcal{R}_{\gamma}(T,U^e(x)) 
\]
\item 
The computation of the analytic expression for the mapping $F$ defining the reachable set $\mathcal{R}_{\gamma}(T,U^e(x))$ in~\re{reach_gamma} can be facilitated by adequate supporting software for symbolic manipulation of Lie algebraic expressions.  Such software has been developed by the authors (see ~\cite{TOR02}) in the form of a software package for Lie algebraic computations in Maple which can be used here for the construction of a basis for the controllability Lie algebra, simplification of arbitrary Lie bracket expressions, and the derivation of the equation for the evolution of the $\gamma$-coordinates.  
\end{itemize}
\end{rem}

\begin{thm}
Let $T\in[0,T_{max}]$, where $T_{max}$ is specified in Proposition~\ref{prop:2} and let the constant $M$ be selected as in Proposition~\ref{prop:3}.  Suppose there exists a constant $C>0$ such that the solutions to SP are bounded as follows
\begin{eqnarray}
\|\bar{u}(x(nT),\tau)\| \leq C\|x(nT)\|\ \ \mbox{for all } \tau \in [0,T],\ n \in \natnums \cup \{0\} \la{ubarbnd} 
\end{eqnarray}
Under these conditions, the concatenated control $u^c(x,\tau)$ given by~\re{contr} renders the closed-loop system $\Sigma$ uniformly asymptotically stable.
\end{thm}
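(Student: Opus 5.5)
The plan has two parts: a discrete-time decrease of $V$ at the sampling instants $nT$, and a bound on the intersample behaviour obtained from the linear growth condition~\re{ubarbnd}. For the first part, write $x_n\eqbydef x(nT)$. By construction~\re{contr} and Proposition~\ref{prop:4}, as long as $x_n\in B(0,R)$,
\[
V(x_{n+1})-V(x_n)< -\frac{\eta}{2}\|x_n\|^2 T .
\]
Hence $V(x_n)$ is nonincreasing. If $\beta(\|x_0\|)<\alpha(R)$, then H\thehypcglobal.b gives $\alpha(\|x_n\|)\le V(x_n)\le V(x_0)\le\beta(\|x_0\|)$, so the sampled states never leave $B(0,R)$ and the recursion can be continued for all $n$. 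Summing the inequality gives $\sum_n \|x_n\|^2\le 2V(x_0)/(\eta T)$, so $x_n\to 0$.

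To obtain a uniform rate, I would compare $\|x_n\|^2$ with $V(x_n)$. On $B(0,R)$, H\thehypcglobal.a together with twice continuous differentiability and $V(0)=0$, $\nabla V(0)=0$ (forced by $V\ge 0$) yields $V(x)\le c_R\|x\|^2$. Therefore
\[
V(x_{n+1})\le \Big(1-\frac{\eta T}{2c_R}\Big)V(x_n),
\]
which gives geometric decay of $V(x_n)$ and hence of $\|x_n\|$ through $\alpha^{-1}$. If this quadratic comparison is not available, the sum bound combined with the class-$\mathcal{K}$ functions still gives a uniform attractivity estimate that depends only on $\|x_0\|$.

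For the second part, take $\tau\in[nT,(n+1)T]$. The control satisfies $\|u^c\|\le C\|x_n\|$ by~\re{ubarbnd}. Since $f_0(0)=0$ and the $f_i$ are analytic, hence locally Lipschitz, on a compact ball $\bar B(0,R')$ we have $\|f_0(x)\|\le L\|x\|$ and $\|f_i(x)\|\le L$. Then
\[
\frac{d}{d\tau}\|x\|\le L\|x\|+mLC\|x_n\| ,
\]
and Gronwall's inequality gives $\|x(\tau)\|\le K\|x_n\|$ with $K=e^{LT}(1+mCT)$, provided the trajectory stays in $\bar B(0,R')$. A continuation argument guarantees this: choose $R'=KR$ and confine initial data as above.

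Combining the two parts, $\|x(\tau)\|\le K\|x_n\|\le K\alpha^{-1}(\beta(\|x_0\|))$ for all $\tau\ge 0$. This is uniform stability: given $\epsilon$, take $\delta$ with $K\alpha^{-1}(\beta(\delta))<\epsilon$, noting that $\delta$ is independent of the initial time because the feedback is periodic and the construction is time-invariant in $n$. The same bound together with $\|x_n\|\to 0$ uniformly in $\|x_0\|$ gives uniform attractivity.

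I expect the intersample bound to be the main obstacle. The Lyapunov function may increase between samples, so the key estimate is that the linear-in-$\|x_n\|$ control bound~\re{ubarbnd} combined with $f_0(0)=0$ keeps $\|x(\tau)\|$ proportional to $\|x_n\|$. The remaining care is bookkeeping: make sure the trajectory stays in the region where Propositions~\ref{prop:2}--\ref{prop:4} and the Lipschitz constants apply. I would handle this by fixing a sublevel set of $V$ inside $B(0,R/K)$.
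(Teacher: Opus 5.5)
Your proposal is correct and follows essentially the same route as the paper. Both arguments combine the sampled decrease from Proposition~\ref{prop:4}, confinement of the sampled states through $\alpha$ and $\beta$, and a Gronwall intersample bound $\|x(t_k+\tau)\|\le K\|x(t_k)\|$ obtained from~\re{ubarbnd} and $f_0(0)=0$. For convergence the paper uses the telescoping/contradiction argument (your fallback) rather than your optional geometric-decay comparison $V\le c_R\|x\|^2$.

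One bookkeeping point: choosing $R'=KR$ is circular, since $K$ depends on the Lipschitz constants on $\bar B(0,R')$. The fix is the one in your last paragraph: fix the ball first, then restrict the sampled states to $B(0,R/K)$. This matches the paper's choice of $r$ with $re^{KT}\le R$.
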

\begin{pf}
Let $t_k=t_0+kT$,  $k\in\natnums$, and let $x(t)$ denote the state of the closed-loop system at time $t$ due to control~\re{contr}.  By Proposition~\ref{prop:4} the state of system $\Sigma$ with control input~\re{contr} satisfies 
\begin{eqnarray}
V(x(t_{k+1}))-V(x(t_k))\leq-\varphi(\|x(t_k)\|) \hspace{3em} \forall\ k\in\natnums \cup \{0\} \la{vadec}
\end{eqnarray}
where $\varphi(\|x(t_k)\|)=\frac{\eta}{2}\|x(t_k)\|^2 T$.

By invoking the Gronwall-Bellman lemma with $\bar{u}$ satisfying~\re{ubarbnd} it is easy to show that (see the proof of Proposition~\ref{prop:2} in which the state of $\Sigma^e$ should be replaced by the state of $\Sigma$) for any constants $R>0$ and $T>0$, there exist constants $ r \in [0, R]$ and $K>0$ such that 
\begin{eqnarray}
\|x(t_k+\tau)\|\leq \|x(t_k)\|\exp({K\tau}) < R \la{expbound}
\end{eqnarray}
for all $x(t_k)\in B(0,r)$, and for all $\tau\in[0,T]$.

To prove global uniform asymptotic stability, it is necessary to show that: (1) the equilibrium point of~\re{S} is uniformly stable, and (2) that the trajectories $x(t)$, $t\geq 0$, converge to the origin uniformly with respect to time.

\underline{(1) Uniform stability:}\\
Uniform stability is proved by showing that for all $R>0$ there exists $\delta(R)>0$ such that for $\|x(t_0)\|<\delta(R)$, the state remains in $B(0,R)$, i.e. $\|x(t)\|<R$ for all $\forall\ t, t_0$, with $t\geq t_0$.  To this end, define $\delta(R) \eqbydef \beta^{-1}(\alpha(r))$.  By assumption \mbox{H\thehypcglobal}.b, $\alpha(\delta) \leq \beta(\delta) =\alpha(r)$, so $\delta \leq r$.  For all $x(t_0)\in B(0,\delta) \subset B(0,r)$, we further have that $\beta(\|x(t_0)\|)<\beta(\delta)=\alpha(r)$ because $\beta(\cdot)$ is strictly increasing.  Therefore, by assumption \mbox{H\thehypcglobal}.b: $V(x(t_0))\leq\beta(\|x(t_0)\|)<\alpha(r)$, and due to~\re{vadec}, $V(x(t_1))<V(x(t_0))<\alpha(r)$, whenever $x(t_0)\neq 0$.  Again, by assumption \mbox{H\thehypcglobal}.b, $\alpha(\|x(t_1)\|)\leq V(x(t_1))$, which implies that $\alpha(\|x(t_1)\|)< \alpha(r)$, so $\|x(t_1)\|< r$. Now, suppose that $V(x(t_n))<\alpha(r)$, for some integer $n$. If $x(t_n)\neq 0$ then, by virtue of the same argument as the one presented above, $V(x(t_{n+1}))<V(x(t_n))<\alpha(r)$, and $\alpha(\|x(t_{n+1})\|)\leq V(x(t_{n+1}))<\alpha(r)$,  so, again $\|x(t_{n+1})\| < r$. By induction, then $\|x(t_k)\|<r$ for all $k\in\natnums \cup \{0\}$, and by direct application of~\re{expbound}, $\|x(t_k+\tau)\|\leq \|x(t_k)\|\exp({K\tau})<r \exp({K\tau}) \leq R$ for all $\tau\in[0,T]$, $k\in\natnums\cup \{0\}$.  It hence follows that system $\Sigma$ with feedback law~\re{contr} is uniformly stable. 

\underline{(2) Global uniform convergence:}\\
Global uniform convergence, requires the existence of a function $\xi : \reals^n \times \reals_+ \rightarrow \reals_+$ such that, for all $x \in \reals^n$,  $lim_{t \rightarrow \infty} \xi(x,t)=0 $ and such that $\| x(t) \| \leq \xi(x(t_0), t-t_0)$ for all $t_0$, $t \geq t_0$.  The last inequality translates into the requirement that for all $R>0$ there exists $ \bar{T}(R, x_0)\geq 0$ such that $\|x(t)\|< R$, for all $t_0 > 0 $ and for all $t\geq t_0+\bar{T}$. 

Let $R >0$ be any given constant and let $\delta(R) >0$ be such that 
$\|x(t_0)\|<\delta(R)$ implies that $\|x(t)\|<R$ for all $t_0 \geq 0$ and
for all $t \geq t_0$.  Such a $\delta$ exists by virtue of uniform stability of the closed-loop system.  It remains to show that there exists an index $k^*(x(t_0),\delta) \in \natnums\cup \{0\}$ such that
\begin{eqnarray}
\|x(t_{k^*})\|<{\delta} \la{p1}
\end{eqnarray} By contradiction, suppose that $\|x(t_k)\|\geq {\delta}$, for all $k \in \natnums\cup \{0\}$. 
By virtue of~\re{vadec}, for any $k\in\natnums$: 
\vspace{-1.5em}
\begin{eqnarray}
V(x(t_{1}))-V(x(t_0))\leq-\varphi(\|x(t_{0})\|) 
                     \leq-\varphi(\delta)\nonumber\\
\vdots\hspace{35mm}\nonumber\\
V(x(t_{k}))-V(x(t_{k-1}))\leq-\varphi(\|x(t_{k-1})\|)
                         \leq-\varphi(\delta)\nonumber\\
V(x(t_{k+1}))-V(x(t_k))\leq-\varphi(\|x(t_{k})\|)
                       \leq-\varphi(\delta)\nonumber
\end{eqnarray}
Adding the above $(k+1)$ inequalities, yields
\[
V(x(t_{k+1}))-V(x(t_0))\leq-(k+1)\varphi(\delta) 
\]
which implies that 
\[
V(x(t_{k+1}))\leq V(x(t_0))-(k+1)\varphi(\delta)
              \leq \beta(\|x(t_0)\|)-(k+1)\varphi(\delta)
\hspace{1em} \forall\ k\in\natnums\cup\{0\} \nonumber
\]
The above inequality directly indicates the existence of a finite index $\bar{k} \geq 0$ such that $V(x(t_{\bar{k}}))<0$ which contradicts the fact that $V$ is positive definite. Hence, there exists a finite index $k^* \in \natnums$ such that ~\re{p1} is valid. Clearly, the index $k^*$ depends only on the value of $x(t_0)$ and $R$, but is independent of the particular value of $t_0$.

By virtue of uniform stability, we now conclude that for all $t_0 \geq 0$ and for all  $t>t_0+{k^*}T$, $\|x(t)\|< R$, which proves global uniform convergence with $\bar{T}(x_0, R)\eqbydef {k^*} T$.  This completes the proof of global uniform asymptotic stability of the closed-loop system $\Sigma$ with control~\re{contr}.
\hspace*{\fill}\qed\end{pf}

\begin{rem}
The assumption~\re{ubarbnd} is not restrictive as it can always be satisfied if the system is uniformly controllable in the following sense: for every constant extended control ${v}$ there exists a $\bar{u}\in\mathcal{P}^m$ such that $x^e(T,x,{v})=x^o(T,x,\bar{u})$ and $\|\bar{u}(\tau)\| \leq M_1 \|v\|$, for all $\tau \in [0,T]$ and some $M_1>0$.  The last requirement is not included as a design condition in SP for brevity of exposition and because it is easy to satisfy.  
\end{rem}

\section{Stabilization of an Underactuated Rigid Body in Space}
The above stabilization approach is applied to the following example in $\reals^6$ with two inputs which models an underactuated rigid-body in space:
\begin{eqnarray}
& &\hspace{0cm} \dot{x} = f_0(x)+f_1(x)u_1+f_2(x)u_2 \la{rb_rot}\\
\textrm{where,}\ \ f_0(x) &= &
\left (\sin(x_3)\,\sec(x_2)\,x_5+\cos(x_3)\,\sec(x_2)\,x_6\right ) 
      \frac{\partial}{\partial x_1}\nonumber\\
& & \hspace*{-9mm} \mbox{} +
\left (\cos(x_3)\,x_5-\sin(x_3)\,x_6\right )
      \frac{\partial}{\partial x_2}\nonumber\\
& & \hspace*{-9mm} \mbox{} +
\left (x_4+\sin(x_3)\,\tan(x_2)\,x_5+\cos(x_3)\,\tan(x_2)\,x_6\right )
      \frac{\partial}{\partial x_3} +
      a\,x_4 x_5 \frac{\partial}{\partial x_6}, \nonumber\\
  f_1(x)&=&\frac{\partial}{\partial x_4},\ \ \
  f_2(x) = \frac{\partial}{\partial x_5},\ \ \
\textrm{and}\ \ {x} = [{x}_1,\ {x}_2,\ {x}_3,\ 
                       {x}_4,\ {x}_5,\ {x}_6]^T 
\nonumber
\end{eqnarray}
with $a=-0.5$; see~\cite{COP00} for details on the model derivation.  Although more effective stabilization methods for this particular system have been constructed in the literature, these explicitly exploit the structure and the form of the system equations.  The purpose of this example is merely to explain the approach presented which applies to systems with drift, in their full generality.

The model used here is non-nilpotent, thus it does not lend itself directly to the application of the method presented.  It was however selected to indicate to the reader that the approach can be extended to non-nilpotent systems, provided that the last are suitably ``approximated'' by nilpotent systems.  At this stage, it is not our aim to introduce rigorous criteria for obtaining such approximations (see~\cite{HER91}), but rather to demonstrate that even a type of nilpotent truncation of the controllability Lie algebra of the original system can prove sufficient to implement the method.  Obviously, any such approximation or truncation should necessarily preserve controllability of the system.  It is further known, (see Theorem 2 in~\cite{LAF93}), that the steering error introduced while employing a truncated version of the controllability Lie algebra is a decreasing function of the distance between the initial and target points. It follows that the steering error can be controlled by selecting an adequately small time horizon $T$.  Both the degree of nilpotency and the horizon $T$ can be selected on a trial and error basis by requesting periodic decrease in the Lyapunov function which is a directly verifiable criterion for the adequacy of the truncation.

In the above context, system~\re{rb_rot} is assumed to be approximated by another system of a similar structure 
\[
\tilde{\Sigma}:\hspace{1cm}\dot{x}=g_0(x)+g_1(x)u_1+g_2(x)u_2
\]
whose controllability Lie algebra, $L(g_0,g_1,g_2)$, corresponds to a nilpotent truncation of order four of $L(f_0,f_1,f_2)$.  This is to say that $L(g_0,g_1,g_2)$ is nilpotent of order four and has
\[
\{g_0,\,g_1,\,g_2,\,[g_0,g_1],\,[g_0,g_2],\,[g_1,[g_0,g_2]],\,[[g_0,g_1],[g_0,g_2]]\}
\]
as its basis.  Such truncation preserves the STLC property of the system, as is easily verified using Theorem 7.3 in~\cite{SUS87}.

The differential equations~\re{geq} for the evolution of the $\gamma$-coordinates for the approximating system are:
\begin{eqnarray}
\hspace{1em}
\left [\begin{array}{c}
\dot{\gamma}_0\\\dot{\gamma}_1\\\dot{\gamma}_2\\\dot{\gamma}_3\\\dot{\gamma}_4\\\dot{\gamma}_5\\\dot{\gamma}_6
       \end{array}\right ]
& = & 
\left [\begin{array}{ccccccc} 1&0&0&0&0&0&0\\0&1&0&0&0&0&0\\0&0&1&0&0&0&0\\
0&-\gamma_0&0&1&0&0&0\\0&0&-\gamma_0&0&1&0&0\\0&\gamma_0\gamma_2&\gamma_0\gamma_1&-\gamma_2&-\gamma_1&1&0\\0&-a\gamma_0^2\gamma_2&\gamma_0\gamma_3-a\gamma_0^2\gamma_1&a\gamma_0\gamma_2&a\gamma_0\gamma_1-\gamma_3&-a\gamma_0&1
       \end{array}\right ]
\left [\begin{array}{c} u_0\\u_1\\u_2\\u_3\\u_4\\u_5\\u_6
       \end{array}\right ] \la{ge:rb_rot}
\end{eqnarray}\vspace{-2mm}\\
with $\gamma_i(0)=0$, $i=0,1,\ldots,6$.  A detailed derivation of these equations is presented in~\cite{TOR02} and makes extensive use of the package for Lie algebraic computations also described in~\cite{TOR02}.  A quadratic function $V(x)=\frac{1}{2}\|x\|^2$ is chosen to define $U^e(x)$.

Integrating~\re{ge:rb_rot} with constant controls ${v}_i$ over the interval $[0,T]$ yields the required map $F:\reals^{7}\times\reals\rightarrow\reals^6$:
{\small
\baselineskip 5mm
\begin{eqnarray}
v_1&=&F_1(\gamma^e,T)=\frac{\gamma^e_1(T)}{T}\nonumber\\
v_2&=&F_2(\gamma^e,T)=\frac{\gamma^e_2(T)}{T}\nonumber\\
v_3&=&F_3(\gamma^e,T)=\frac{1}{T}
\left (\gamma^e_3(T)+\frac{\gamma^e_0(T)\gamma^e_1(T)}{2}\right ) \la{{v}(gamma)}\nonumber\\
v_4&=&F_4(\gamma^e,T)=\frac{1}{T}
\left (\gamma^e_4(T)+\frac{\gamma^e_0(T)\gamma^e_2(T)}{2}\right )\nonumber\\
v_5&=&F_5(\gamma^e,T)=\frac{1}{T}
\left (\gamma^e_5(T)+\frac{\gamma^e_1(T)\gamma^e_4(T)}{2}+\frac{\gamma^e_2(T)\gamma^e_3(T)}{2}-\frac{\gamma^e_0(T)\gamma^e_1(T)\gamma^e_2(T)}{6}
\right )\nonumber\\
v_6&=&F_6(\gamma^e,T)=\frac{1}{T}
\left
(\gamma^e_6(T)+\frac{a\,\gamma^e_0(T)\gamma^e_5(T)}{2}+\frac{\gamma^e_3(T)\gamma^e_4(T)}{2}
+\frac{a\,{\gamma^e_0}^2(T)\gamma^e_1(T)\gamma^e_2(T)}{12}\right .\nonumber\\
&&\hspace{7mm}\left .-\frac{\gamma^e_0(T)\gamma^e_2(T)\gamma^e_3(T)}{12}(1+a)
+\frac{\gamma^e_0(T)\gamma^e_1(T)\gamma^e_4(T)}{12}(1-a)
\right )
\nonumber
\end{eqnarray}
}
The above results in the following expression for the reachable set
in the $\gamma$-coordinates: 
\[
\mathcal{R}_{\gamma}(T,U^e(x))=
\left \{ \gamma~|~\sum_{i=0}^{6} \nabla{V}g_i(x)F_i(\gamma,T) < -\eta\|x\|^2,\ \|F(\gamma,T)\|\leq M\|x\| \right \} 
\]
The expressions for $\gamma(T,\bar{u}^d)$ in terms of the constant parameters defining $\bar{u}^d \in\mathcal{P}^{r}$ are easily obtained by symbolic integration and are subsequently employed to solve SP.  The values  $s=6$, $T=0.1$, $\eta=1$, $M=10$, $R=2$ and $C=50$, were assumed in the solution of SP.   The simulation results are obtained using an initial condition $x_0=[-0.1\ 0\ 0.2\ 0\ 0\ 0.1]^T$ and are shown in Figure~\ref{fig:rb2a}. 
\begin{figure}[htbp]
\hspace*{-2mm}
\begin{tabular}{cc}
  \includegraphics[scale=0.37]{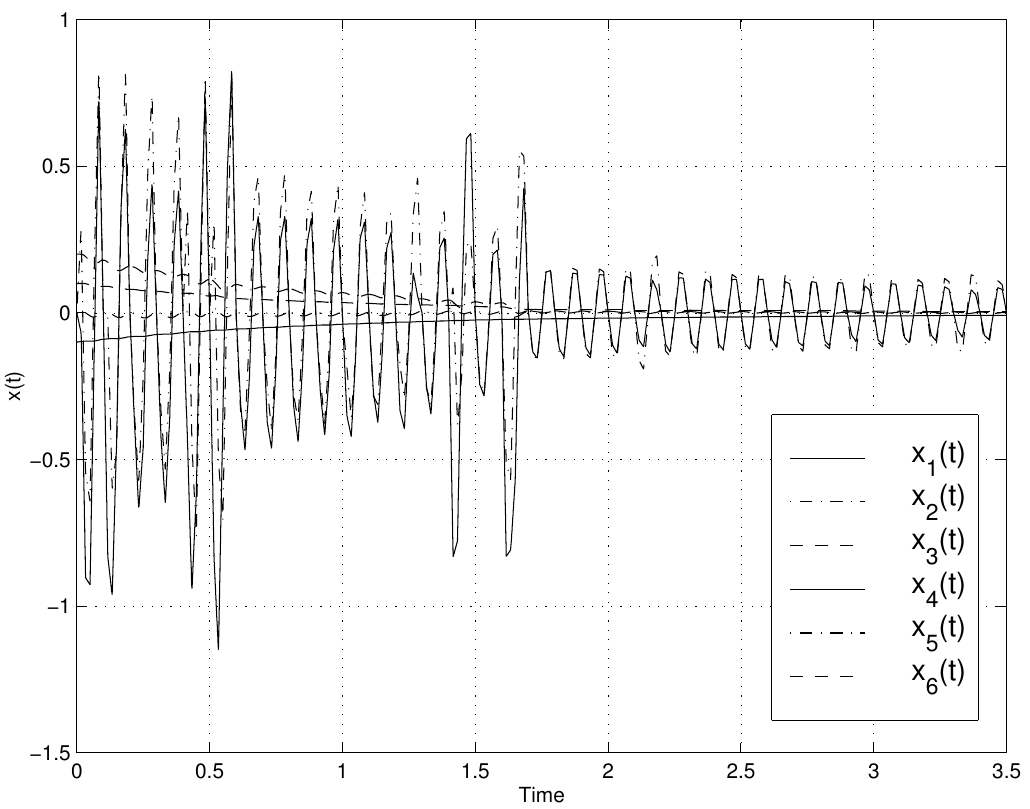}\vspace{-2ex}&
  \includegraphics[scale=0.37]{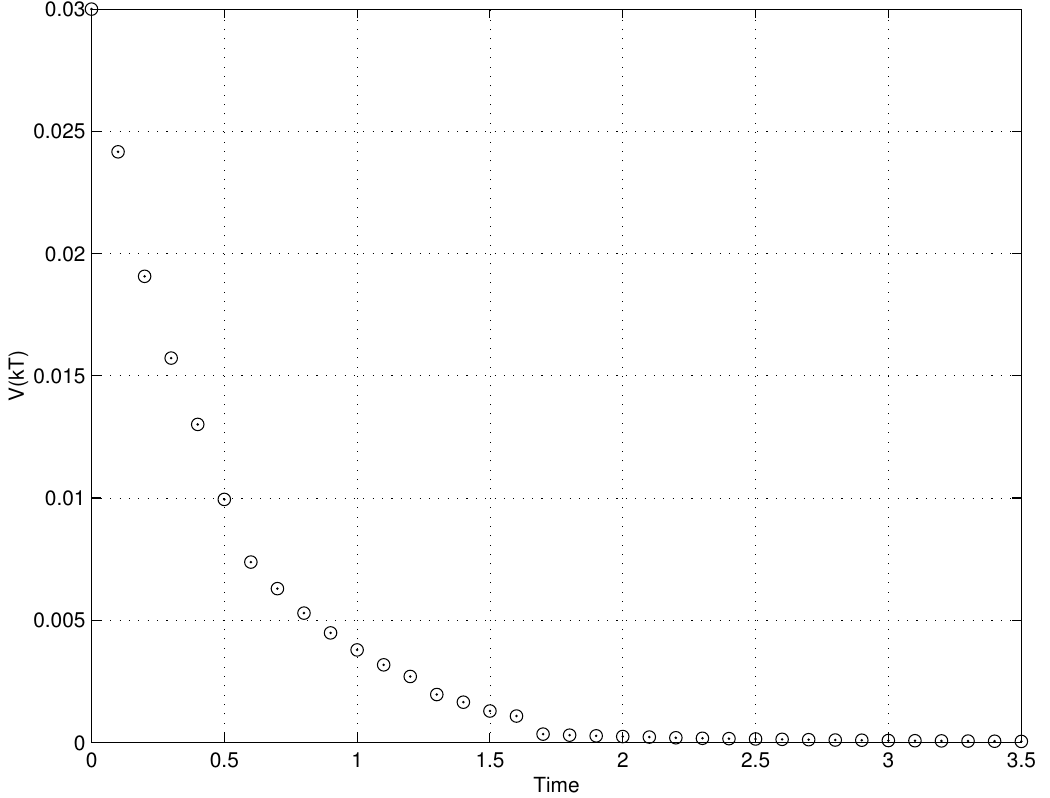}\vspace{-2ex}\\[1em]
 a & b\\
 {\small (a) State trajectory $x(t)$.} & 
 \hspace{13mm}\parbox[t]{0.4\textwidth}
 {\small (b) Function $V(x(t))$\\
     \hspace*{1.5em} for $t=kT$, $k=0,1,\ldots, 35$.}
\end{tabular}
\caption{Results for the stabilization of the rigid body
  obtained with the concatenated control $u^c(x,\tau)$.}
\label{fig:rb2a}
\end{figure}

\section{Conclusions}
The presented stabilizing feedback design approach applies to general systems with drift for which controllable linearizations, as well as continuous stabilizing state feedback laws, may not exist.

The approach is computationally expensive, but less so as compared with~\cite{MIC01}.  As compared with the control procedure of~\cite{HER80}, it provides a more systematic tool for generation of complicated Lie bracket motions of the system which might be necessary in the process of stabilization.

The approach does not deliver the feedback uniquely as it employs arbitrary solutions to a satisficing nonlinear programming problem.  This can be viewed as the strength of the method, as it leaves the designer much freedom to accommodate for other goals.  For example, it may be desired to solve SP while simultaneously minimizing the number of discontinuities in the open loop control or else to construct a time-varying continuous control.  

Future research is aimed at exploring possible simplifications which might originate from a particular structure of the Wei-Norman equations describing the evolution of the flow of the system on the Lie group.
\appendix
\renewcommand{\theequation}{\thesection.\arabic{equation}}

\section{Proof of Proposition~\ref{prop:2}}\label{appen1}
Since $V$ is twice continuously differentiable, $g^v$ is analytic and linear in ${v}$, and $g_0(0)=0$, then $\nabla{V}$ and $g^v$ are Lipschitz continuous on $B(0,2R)$, uniformly with respect to  ${v}=[{v}_1,\ldots,{v}_{r-1}]^T$ satisfying  $\|{v}\| \leq M \|x\|$.  Hence, there exists a $K>0$ such that:
\begin{eqnarray}
          \left \|g^w(y)-g^v(x)\right \| \leq 
             K\|y-x\| 
\hspace{1em}\mbox{ and }\hspace{1em}
          \left \|\nabla{V}(y)-\nabla{V}(x)\right \| \leq 
             K\|y-x\| \nonumber
\end{eqnarray}
for all $x\in B(0,R)$, $y\in B(0,2R)$, and for any constant $v$ and  $w$ such that $\|v\| \leq M \|x\|$ and $\|w\| \leq M \|x\|$.

Let $x^e(t) \eqbydef x^e(t,x,v)$, $t \geq 0$. First, it is shown that there exists a $T_1>0$ and a constant $K_1>0$ such that
\begin{eqnarray}
\|x^e(s)-x\| &\leq& \|x\|
         \left ( \exp({K\,s})-1\right ) \la{bdx}
\end{eqnarray}
and
\begin{eqnarray}
\|x^e(s)\|\leq K_1\|x\| \la{bxs}
\end{eqnarray}
for all $s\in [0,T_1]$ such that $x^e(s)\in B(0,2R)$.

To this end it suffices to notice that
\begin{eqnarray}
\|x^e(s)-x\|
           & \leq& \int_{0}^{s}\| g^v(x)\| {\d}\tau +
                   \int_{0}^{s}\| g^v(x^e(\tau))-g^v(x)\| {\d}\tau\nonumber\\ 
           & \leq& K\,\|x\|\,s
                   +\int_{0}^{s}K\|x^e(\tau)-x\| {\d}\tau\nonumber
\end{eqnarray}
which, by the application of the Gronwall-Bellman lemma, yields inequality~\re{bdx}.

It can be shown that if $T_{1}$ is chosen so that $\left ( \exp({K\,T_{1}})-1\right )\leq  \frac{1}{2}$ then~\re{bdx} holds for $s\in [0,T_1]$.  By contradiction, suppose that there exists an $s_1 < T_1$ such that $\|x^e(s_1)\| = 2R$.  It follows that $2R \leq \|x\| + \|x^e(s_1) - x\| \leq R + \|x\| \left ( \exp({K\,s_1})-1\right ) \leq \frac{3}{2}R$ which is false, and hence~\re{bdx} is valid for $s\in [0,T_1]$.  Inequality~\re{bxs} follows from ~\re{bdx} since
\begin{eqnarray}
\|x^e(s)\|\,\leq\, \|x^e(s)-x\|+\|x\|\,\leq\, \|x\|\exp({K\,s}) \,\leq\, K_1\|x\|\nonumber
\end{eqnarray}
with $K_1=\exp({KT_1})$.

Now,
{\small
\begin{eqnarray}
\hspace*{-5mm}  V(x^e(T))-V(x) 
        &\leq& \nabla{V}(x)g^v(x)T + 
               \int_{0}^{T}\left \|\nabla{V}(x^e(s))g^v(x^e(s))
                                  -\nabla{V}(x)g^v(x)\right \| {\d}s
                                                              \nonumber\\[-1.3ex]
&& \la{DV2}\\[-1.3ex]
        &\leq& -\eta\, \|x\|^2\, T + 
          \bar{K}\|x\| \int_{0}^{T}\left \|x^e(s) - x\right \| {\d}s\nonumber
\end{eqnarray}
}
because 
{\small
\begin{eqnarray}
\left\|\nabla{V}(x^e(s))g^v(x^e(s))-\nabla{V}(x)g^v(x)\right \| &\leq&
     \|\nabla{V}(x^e(s))g^v(x^e(s))-\nabla{V}(x^e(s))g^v(x)\|\nonumber\\
& &\mbox{} 
   + \|\nabla{V}(x^e(s))g^v(x)-\nabla{V}(x)g^v(x)\|\nonumber\\
&\leq& \bar{K}\|x\|\,\|x^e(s)-x\| \nonumber
\end{eqnarray}
}
with $\bar{K}=K^2(K_1+1)$.

Hence, if $T<T_1$ then $x^e(s)\in B(0,2R)$ for all $s\in [0,T]$ and, using~\re{bdx} in~\re{DV2}, yields 
{\small 
\[
\hspace*{-5mm}V(x^e(T))-V(x)\ \leq\ -\eta\, \|x\|^2\,T + 
     \bar{K} \|x\|^2 \int_{0}^{T} \left ( \exp({K\,s})-1 \right ) {\d}s
     \ \leq\  -\frac{\eta}{2}\|x\|^2 q(T)\nonumber
\]
}
where $q(T) \eqbydef \left (2 + \frac{2\,\bar{K}}{\eta}\right )T-\frac{2\,\bar{K}}{\eta\,K} \left (\exp({K\,T})-1\right )$.  If $r(T)\eqbydef q(T)-T$, then  $r(0)=0$ and $r'(0)=1$, so there exists a $T_{max}\leq T_1$ such that $r(T)\geq 0$ for all $T\in [0,T_{max}]$.  Hence $q(T)\geq T$ for all $T\in [0,T_{max}]$ which proves~\re{decVe}.\hfill$\square$

\section{Proof of Proposition~\ref{prop:3}}\label{appen2}
For any $\epsilon\in (0,\frac{\eta}{\zeta^2}]$ and any given $x\in B(0,R)$ 
let 
\[
z(x)\eqbydef-\epsilon \nabla{V}^T(x)
\]
Then, by virtue of hypothesis \mbox{H\thehypcglobal}.b, $z(x)$ satisfies 
\[
\nabla{V}z(x)=-\epsilon\|\nabla{V}(x)\|^2\leq -\epsilon\zeta^2\|x\|^2\leq 
  -\eta\|x\|^2
\]
and is realizable as the right-hand side of the extended system~\re{Se}, i.e. there exists an extended control ${v}$ such that
\[
 g_0(x)+\sum_{i=1}^{r-1}g_i(x){v}_i = z(x)
\]
Clearly,
\[
 {v}(x) = Q^{\dagger}(x)\left (z(x)-g_0(x)\right ),\hspace{1cm} 
 {v}=[{v}_1\ {v}_2\ \ldots\ {v}_{r-1}]^T 
\]
where\hfill $Q^{\dagger}=Q^T\left (Q\,Q^T\right )^{-1}$\hfill is\hfill the\hfill pseudo-inverse\hfill of\hfill the\hfill $n\times (r-1)$\hfill matrix\hfill $Q(x)=$\newline $[g_1(x)\ g_2(x)\ \ldots\ g_{r-1}(x)]$, which is guaranteed to exist for all $x\in\reals^n$ because $\mathrm{rank}\left (\rule[-0.05em]{0em}{1.1em}Q(x)\right )=n$ by construction of the extended system~\re{Se}.  Moreover, $Q^{\dagger}$ is a smooth matrix function of $x$, thus there exists a constant $c(R)>0$ such that 
\[
\|Q^{\dagger}(x)\|\leq c,\hspace{1cm}\forall\ x\in B(0,R)
\]
By Lipschitz continuity of $g_0$ and $\nabla{V}$, there exist constants $d(R)>0$ and $K(R)>0$ such that:\vspace{-6mm}
\begin{eqnarray}
\|{v}\|&\leq& \|Q^{\dagger}(x)\|\,\|z(x)-g_0(x)\|\nonumber\\
       &\leq& \|Q^{\dagger}(x)\|\,\left (\|z(x)\|+\|g_0(x)\|\right )\nonumber\\
       &\leq& c\left (\frac{\eta\,K}{\zeta^2}\|x\|+d\|x\|\right )\nonumber  
\end{eqnarray}
and hence with $M=c\left (\frac{\eta\,K}{\zeta^2}+d\right )$, the extended control ${v}$ is in the set $U^e(x)$; it is only one of many controls which satisfy ${v}\in U^e(x)$.  Let $\gamma^e(T)$ be the $\gamma$-coordinates of the flow at time $T$ of the extended system~\re{Se} with control ${v}$.  Then $\gamma^e(T)\in \mathcal{R}_{\gamma}(T,U^e(x))$.  By virtue of the controllability assumption~\ref{h:controllable}, there exists an open loop control $\bar{u}\in\mathcal{P}^m$ such that $\gamma^e(T,{v}^d)=\gamma(T,\bar{u}^d)$.  Hence, $\gamma(T,\bar{u}^d)\in \mathcal{R}_{\gamma}(T,U^e(x))$ thus proving the existence of solutions to SP at any $x\in B(0,R)$.\hfill$\square$
\input{abbrev}

\end{document}

%% file: abbrev.tex
\def\AMO{Appl. Math. Optim.}
\def\AUTOM{Automatica J. IFAC}

\def\COMMPAM{Commun. Pure Appl. Math.} 
\def\COMPPHYS{Comput. Phys. Comm.}

\def\EJC{Eur. J. Control}

\def\IEEEAC{IEEE Trans. Automat. Control}
\def\IEEEIP{IEEE Trans. Image Process.}
\def\IEEEIT{IEEE Trans. Inform. Theory}
\def\IEEESP{IEEE Trans. Signal Process.}

\def\IMHOTEP{IMHOTEP J. Afr. Math. Pures Appl.}

\def\IJC{Internat. J. Control}

\def\JDE{J. Differential Equations}
\def\JFA{J. Funct. Anal.}
\def\MCSS{Math. Control Signals Systems}

\def\AMSM{Mem. Amer. Math. Soc.}

\def\SIAMCON{SIAM J. Control Optim.}
\def\SIAMREV{SIAM Rev.}
\def\SYSCON{Systems Control Lett.} 

\def\ZWV{Z. Wahrsch. Verw. Gebiete} 

\def\ACC{American Control Conf.}
\def\CDC{IEEE Conf. on Decision and Control}
\def\ICIP{IEEE Int. Conf. on Image Process.}
\def\IFAC{IFAC Proc. Ser.}
\def\AMSP{Proc. Amer. Math. Soc.}
\def\IEEEP{Proc. of the IEEE}